\newcommand{\Rz}{\mathbb{R}}
\newcommand{\Rzd}{\mathbb{R}^{d {\times} d}_{\rm dev}}
\newcommand{\Rzs}{\mathbb{R}^{d {\times} d}_{\rm sym}}
\newcommand{\Rzn}{\mathbb{R}^{d {\times} d}}
\newcommand{\sy}{{\rm sym}}
\newcommand{\an}{{\rm anti}}
\newcommand{\dev}{{\rm dev}}
\newcommand{\disp}{\displaystyle}
\newcommand{\haz}{\widehat}
\newcommand{\epsi}{\varepsilon}
\newcommand{\dx}{\,\text{\rm d}x}
\newcommand{\dt}{\,\text{\rm  d}t}
\newcommand{\ds}{\,\text{\rm  d}s}
\newcommand{\SL}{\text{\rm SL}(d)}
\newcommand{\SO}{\text{\rm SO}(d)}
\newcommand{\GLp}{\text{\rm GL}_+(d)}
\newcommand{\Wel}{W_{\rm el}}
\newcommand{\Wh}{W_{\rm h}}
\newcommand{\Whf}{\widetilde W_{\rm h}}
\newcommand{\I}{I}
\newcommand{\C}{{\mathbb C}}
\renewcommand{\H}{{\mathbb H}}
\renewcommand{\t}{{\top}}
\newcommand{\EE}{{\mathcal E}}
\newcommand{\DD}{{\mathcal D}}
\newcommand{\WW}{{\mathcal W}}
\renewcommand{\SS}{{\mathcal S}}
\newcommand{\QQ}{{\mathcal Q}}
\newcommand{\HH}{{\rm H}}
\newcommand{\LL}{{\rm L}}
\newcommand{\CC}{{\rm C}}
\newcommand{\UU}{{\mathcal U}}
\newcommand{\ZZ}{{\mathcal Z}}
\newcommand{\lan}{\langle}
\newcommand{\ran}{\rangle}
\newcommand{\dist}{{\rm dist}}
\newcommand{\Diss}{{\rm Diss}}
\def\Gliminf{\mathop{\Gamma\text{--}\mathrm{liminf}}}
\def\Glimsup{\mathop{\Gamma\text{--}\mathrm{limsup}}}
\def\Argmin{\mathop{\mathrm{Arg}\,\mathrm{min}}}
\newcommand{\id}{{\rm id}}
\newcommand{\el}{{\rm el}}
\newcommand{\cof}{{\rm cof}}
\newcommand{\ove}{\overline}
\newtheorem{theorem}{Theorem}[section]
 \newtheorem{corollary}[theorem]{Corollary}
\newtheorem{definition}[theorem]{Definition}
\newtheorem{lemma}[theorem]{Lemma}
\begin{document}

\title[Linearized plasticity as {$\Gamma$}-limit of finite
              plasticity]{Linearized plasticity is the evolutionary
               \\ {$\Gamma$}-limit 
               of finite
             plasticity}

\author{Alexander Mielke}
\address[Alexander Mielke]{Weierstra\ss-Institut f\"ur Angewandte Analysis und Stochastik, 
Mohrenstra\ss e 39, D-10117 Berlin and Institut f\"ur Mathematik, 
Humboldt-Universit\"at zu Berlin, Rudower Chaussee 25, D-12489 Berlin, Germany
}
\email{mielke\,@\,wias-berlin.de}
\urladdr{http://www.wias-berlin.de/people/mielke/}

\author{Ulisse Stefanelli}
\address[Ulisse Stefanelli]{Istituto di Matematica Applicata e Tecnologie Informatiche E. Magenes - CNR, 
v. Ferrata 1, I-27100 Pavia, Italy and Weierstra\ss-Institut f\"ur Angewandte Analysis und Stochastik, 
Mohrenstra\ss e 39, D-10117 Berlin, Germany}
\email{ulisse.stefanelli\,@\,imati.cnr.it} 
\urladdr{http://www.imati.cnr.it/ulisse/}
\thanks{A. Mielke was partially supported by DFG within
the Research Unit FOR 797 (subproject P5, Mie 459/5-2). U. Stefanelli was partially supported by FP7-IDEAS-ERC-StG Grant
  \#200497 {\it BioSMA}, the CNR-AV\v CR grant {\it SmartMath}, and the
  Alexander von Humboldt Foundation}
\keywords{finite-strain elastoplasticity, linearized elastoplasticity,
  $\Gamma$-convergence, rate-independent processes}

\begin{abstract} 
We provide a rigorous
justification of the classical linearization approach in
plasticity. By taking the small-deformations limit, we prove via
$\Gamma$-convergence for rate-independent processes that energetic solutions of
the quasi-static finite-strain elastoplasticity system converge to the unique strong solution of
linearized elastoplasticity.
\end{abstract}

\subjclass{74C15, 49J45} 
\maketitle 

\pagestyle{myheadings}

                                %
                                %
                                %
%%%%%%%%%%%%%%%%%%%%%%%%%%%%%%%%%
%%%%%%%%%%%%%%%%%%%%%%%%%%%%%%%%%

\section{Introduction}\label{intro}
\setcounter{equation}{0}

This paper is devoted to the rigorous justification of
the classical linearization approach in finite-strain elastoplasticity. When
restricting to the
small-deformation realm it is indeed customary to leave the nonlinear
finite-strain frame and resort to 
linearized theories instead. This reduction is usually
motivated by means of heuristic Taylor expansion
arguments. Here, we aim at complement these formal motivations by
providing a rigorous linearization proof by means of an {\it evolutionary
$\Gamma$-convergence} analysis of rate-independent processes. In particular, we
address the general time-dependent case, which e.g. allows for cyclic loading.

In the stationary framework, the pioneering contribution in this context goes back to {\sc Dal
  Maso, Negri, \& Percivale} \cite{DalMaso-et-al02} who devised a
convergence proof of finite-strain elasticity to
linearized elasticity. Later, the argument  has been refined by
{\sc Agostiniani, Dal Maso, \& DeSimone} \cite{Agostiniani11} and  extended
to multi-well energies by {\sc Schmidt} \cite{Schmidt08} and to residually stressed materials by {\sc Paroni \& Tomassetti} \cite{Paroni-Tomassetti09,Paroni-Tomassetti11}. The reader
is also referred to
\cite{Gloria-Neukamm10,Mueller-Neukamm11,Neukamm10} for
some related results in the direction of homogenization, to
\cite{Agostiniani-DeSimone11} for an application to the study of
nematic elastomers, to \cite{Braides-Solci-Vitali07,Schmidt09} in the
context of convergence of atomistic models, and to
\cite{Scardia-Zeppieri11} in relation with dislocation theory.

To our knowledge, this is the first result in the {\it evolutionary}
case. With respect to the stationary case of \cite{DalMaso-et-al02},
the evolution situation is quite more involved.
Indeed, the argument in \cite{DalMaso-et-al02} relies on the
$\Gamma$-convergence proof of the small-deformation energy functional
to its linearization limit. Here, we are instead forced to cope with
the occurrence of dissipative plastic evolution by means of a delicate
recovery sequence construction relating energy and dissipation. We
emphasize that finite-strain elastoplasticity is based on the
multiplicative decomposition of the strain tensors. Moreover, the
plastic tensor is to be considered as an element of a multiplicative matrix group. We have
to control these noncommutative {\it multiplicative} structures in linear
function spaces and to establish their convergence to the
corresponding {\it linear} additive
structures.
In order to give some details in this
direction we cannot avoid introducing some minimal notation. 

Finite-strain elastoplasticity is usually based on the multiplicative
decomposition $\nabla \varphi= F_{\rm el} F_{\rm pl}$ \cite{Lee69}. Here
$\varphi:\Omega \to \Rz^d$ is the deformation of the body with respect
to the 
reference configuration $\Omega \subset \Rz^d$ $(d=2,3)$ while $F_{\rm el}$
 and $F_{\rm pl}\in\SL$ stand for the elastic and the plastic strain,
 respectively. Then, the stored energy in the body is
 written as
$$ \int_\Omega \Wel (\nabla \varphi F_{\rm pl}^{-1}) \dx +
\int_\Omega \Wh(F_{\rm pl}) \dx$$
where $\Wel$ is a frame-indifferent elastic stored-energy density and $\Wh$
describes hardening. The plastic flow rule is
expressed by means of a suitably defined dissipation distance $D: \SL \times \SL \to
[0,\infty]$. In particular $D(F_{\rm pl},\haz F_{\rm pl})$ represents
the minimal dissipated energy for an evolution from the plastic strain $F_{\rm
  pl}$ to $\haz F_{\rm pl}$ and is given via a positively
$1$-homogeneous dissipation function $R$ by
$$D(F_{\rm
  pl},\haz F_{\rm pl}) = D(\I, \haz F_{\rm
  pl}F_{\rm
  pl}^{-1} )= \inf \int_\Omega \int_0^1R(\dot P P^{-1}) \dt \dx,$$
the infimum being taken among all smooth trajectories $P:[0,1]\to \Rzn$
connecting $F_{\rm
  pl}$ to $\haz F_{\rm pl}$. Staring from these functionals, by
specifying loadings, boundary, and initial conditions, suitably weak solutions of the quasi-static
finite-plasticity system (see
Section \ref{result}) can
be defined. We refer to \cite{Mielke03} for more information on the
mathematical modeling of finite-strain elastoplasticity. There also
models with additional hardening variables are given. Here we however refrain
from maximal generality in order to emphasize the main
features of the limiting process.

Let now the deformation and the plastic strain be small. In
particular, for $\epsi>0$ let $\varphi_\epsi = \id {+} \epsi u$ and
$F_{\rm pl,\epsi}= \I {+} \epsi z$ %where $u$ is a {\it small} displacement and $z$
%is a {\it small} plastic strain. 
 where $u$ is interpreted as the displacement and $z$ is the
linearized plastic strain. 
Correspondingly, we have that $F_{\rm
el,\epsi} =\nabla \varphi_\epsi F_{\rm pl,\epsi}^{-1}= (\id{+}\epsi \nabla
u)(\I{+}\epsi z)^{-1}$ and we are lead to the consideration of
the  rescaled  finite-strain elastoplasticity energy and dissipation functionals
\begin{align}
& \frac{1}{\epsi^2} \int_\Omega \Wel \big((\I{+}\epsi \nabla u)(\I {+}
 \epsi z)^{-1}\big) \dx +\frac{1}{\epsi^2} 
\int_\Omega \Wh(\I{+}\epsi z) \dx,\nonumber\\
&\frac{1}{\epsi}D((\I{+}\epsi z), (\I{+}\epsi \haz z)).\nonumber
\end{align}
Note that the rescalings above are such that, by assuming 
$\Wel$ and $\Wh$ to admit a quadratic expansion around identity, one
can check that 
\begin{align}
\frac{1}{\epsi^2}\int_\Omega\Wel \big((\I{+}\epsi \nabla u)(\I {+}
 \epsi z)^{-1}\big) \dx \ &\to \  \frac12 \int_\Omega (\nabla u{-}z){:} \C
 (\nabla u{-}z)\dx,\nonumber\\
\frac{1}{\epsi^2} \int_\Omega\Wh(\I{+}\epsi z) \dx\ &\to  \ \frac12
 \int_\Omega z{:} \H z \dx\nonumber\\
 \frac{1}{\epsi}D((\I{+}\epsi z), (\I{+}\epsi \haz z)) \ &\to \  \int_\Omega
 R(\haz z {-} z) \dx.\nonumber
\end{align}
This pointwise convergence is the classical
justification of linearization in plasticity. On the other hand, it is
not sufficient in itself for proving that finite-strain elastoplasticity
trajectories actually convergence to a solution of the 
linearized-plasticity system. 

 Before going on let us mention that the solution concept which is here
under consideration is that of {\it energetic solutions}. Starting
from \cite{Mielke-Theil04}, this solution notion has been extensively
applied in many different rate-independent contexts. We shall however
 record that one of the main motivations for introducing energetic
solutions was exactly that of targeting existence theories for finite-strain
elastoplasticity. In this respect, note that the only available existence result
for finite-strain elastoplastic evolution has been recently obtained within 
the {energetic solvability} frame in \cite{Mainik-Mielke09} after
adding the regularizing term $|\nabla F_{\rm pl}|^r$ for $r >1$ (see
also \cite{Mielke-Mueller06} for some preliminary result),

Our result consists in proving the convergence of energetic solutions
of the finite-strain elastoplasticity system to linearized-plasticity solutions.
In order to prove this
convergence we follow the abstract evolutionary $\Gamma$-convergence theory for
energetic solutions of rate-independent processes developed in
\cite{mrs}. We shall
mention that this evolutionary $\Gamma$-convergence method has recently
attracted attention and has been successfully considered in connection with numerical approximations
\cite{Kruzik05,Mielke-Roubicek09,Giacomini-Ponsiglione06bis}, damage
\cite{Bouchitte-et-al09,Thomas-Mielke10}, fracture \cite{Giacomini-Ponsiglione06},
delamination \cite{Roubicek-et-al09}, dimension reduction \cite{Freddi-et-al10,Liero-Mielke11},
homogenization \cite{Timofte09}, and optimal control \cite{Rindler08,Rindler09}.

According to \cite{mrs}, the convergence of the trajectories
$(u_\epsi,z_\epsi)$ follows by
proving two {\it separate} $\Gliminf$
inequalities for energy and dissipation and constructing of a {\it
  mutual recovery sequence} relating both. Note that separate
$\Gamma$-convergence for energy and dissipation is not sufficient to
pass to the limit within rate-independent processes. Apart from the additional technicalities due to the
presence of the plastic strain and the dissipation functional, it is the
delicate construction of the mutual recovery sequence that distinguishes
our argument from all the already developed stationary analyses in the
spirit of~\cite{DalMaso-et-al02}.

%%%%%%%%%%%%%%%%%%%%%%%%%%%%%%%%%
%%%%%%%%%%%%%%%%%%%%%%%%%%%%%%%%%

\section{Problem setup and results}\label{result}
\setcounter{equation}{0}

Let the reference configuration $\Omega\subset \Rz^d$ be an open,  bounded, and connected  set with
Lipschitz boundary. Moreover, let $\Gamma\subset \partial \Omega$ be relatively
open with ${\mathcal H}^{d-1}(\Gamma)>0$. We define the state space as
$$ \QQ:=\UU \times \ZZ := \big\{u \in \HH^1(\Omega;\Rz^d) \ | \ u=0 \ 
\text{on} \ \Gamma\big\} \times \LL^2(\Omega;\Rzn).$$
Note that the choice of the homogeneous Dirichlet condition on the displacement $u$
is just motivated by the sake of simplicity. In particular, different
boundary conditions may be considered as well.

For all given $A \in \Rzn$ we denote its symmetric and
antisymmetric parts as
$A^\sy := (A {+} A^\t)/2$ and $A^\an =
A {-}A^\sy$.
We indicate by $\Rzs$  and $\Rzn_{\an}$ the subspaces of symmetric and
antisymmetric tensors, respectively,  whereas $\Rzd$
stands for the subspace of symmetric and trace-free tensors, also
called {\it deviatoric} tensors.
The standard Euclidian tensor norm is denoted by $|\cdot|$ and, for
all $A\in \Rzn$ and $\tau>0$,
$B_\tau(A)$ indicates the ball 
$B_\tau(A):=\{B \in \Rzn \ | \ |A-B| < \tau\}$. Moreover, the symbol
$|\cdot|_{\mathbb T}$  stands for the seminorm 
$$|A|_{\mathbb T}^2 := \frac12 A {:}{\mathbb T}A$$
where the $4$-tensor ${\mathbb T}\in\Rz^{d\times d \times d \times d}$
is symmetric (${\mathbb T}_{ij k \ell} = {\mathbb T}_{k \ell i j}$) and positive
semidefinite. For finite-strain elastoplasticity we use the classical notations
\begin{align}
  \SL&:=\{P \in \Rzn \ | \ \det P =1\},\nonumber\\
\SO&:=\{R \in  \SL  \ | \ R^\t R = R R^\t = \I \},\nonumber\\
\GLp&:=\{Q \in \Rzn \ | \ \det Q >0 \}.\nonumber
\end{align}

We assume that the elastic energy density functional $\Wel$ 
fulfills
\begin{subequations}
\begin{align}
&   \Wel: \Rzn \to [0,\infty], \ \ \Wel \in {\rm C}^1(\GLp ), \ \ \Wel\equiv \infty \ \text{on} \ \Rzn\setminus \GLp, \label{smoothness}\\
&\forall F \in \GLp \  \forall R \in \SO:\
\Wel(RF) = \Wel(F),\label{frame}\\
 & \forall F \in \GLp: \ \Wel(F) \geq c_1  \dist^2(F,\SO),\label{distSO}\\
%& \Wel(F) \geq c_2 |F|^d - \frac{1}{c_2},\label{coerc}\\
 & \forall F \in \GLp: \  |F^\t \partial_F \Wel(F)  | \leq c_2(\Wel(F) +
c_3), \label{Mandel}\\
&\exists \C \geq 0\ \forall \delta>0 \ \exists c_{\rm el}(\delta)>0\ \forall A \in
B_{c_{\rm el}(\delta)}(0):  \ \big|\Wel(\I {+} A) - |A|_\C^2 \big| \leq \delta |A|_\C^2,\label{approx}
\end{align}\label{energy_ass}
\end{subequations}
\!\!\! for some positive $c_1, \, c_2$.
Assumption \eqref{frame} is nothing but frame indifference and the
nondegeneracy requirement \eqref{distSO} is quite classical. Assumption \eqref{Mandel} entails the controllability of the {\it Mandel
tensor} $F^\t \partial_F \Wel(F) $ by means of the energy. This is a
crucial condition in finite-strain elastoplasticity
(cf. \cite{Ball84,Ball02}) and was used in the context of rate-independent
processes in 
\cite{Francfort-Mielke06,Mainik-Mielke09}. Condition,
\eqref{approx} encodes the local quadratic character of $\Wel$ around
identity. More precisely, \eqref{approx} states that $|\cdot|_\C$
is the second order Taylor expansion of $\Wel$ at $\I$, and may be reformulated by
saying that $A \mapsto \Wel(\I{+}A)$ is locally restrained between two multiples of
$|\cdot|_\C^2$, namely, 
$$\forall \delta>0 \ \forall A \in B_{c_{\rm el}(\delta)}(0) : \quad  (1{-}\delta) |A|_\C^2 \leq
\Wel(\I{+}A) \leq  (1{+}\delta)|A|^2_\C.$$
Moreover, \eqref{approx} entails
\begin{equation}\label{stress-free}
\Wel(\I)=0, \ \ \partial_F \Wel(\I)=0, \ \ \partial_F^2\Wel(\I) = \C,
\end{equation}
which, in particular, yields that the reference state is
stress free. On the other hand, by assuming \eqref{stress-free} and letting
$\Wel\in\CC^2$ in neighborhood of $\I$, relation \eqref{approx}
follows.

 Note that the symmetry of the elastic tensor $\C$ (implicitly
assumed in the notation $|\cdot|_\C$) may be directly obtained
from the last of \eqref{stress-free} by assuming additional smoothness
on $\Wel$. Moreover, letting $A\in \Rzn$ be given, as we have that
$\exp(A^\an) \in \SO$, the frame indifference \eqref{frame}
entails that the function $t \mapsto \partial_F \Wel(\exp(tA^\an))$ is
constantly equal to $\partial_F \Wel(\I)=0$. Hence, by taking 
its derivative with respect to $t$ and evaluating it at $t=0$ we get
$ \C A^\an=0$.
Namely, $\C$ necessarily fulfills also the so called {\it minor
  symmetries} $\C_{ij  k \ell} = \C_{j i  k \ell} = \C_{ij \ell k}$
and we have 
 \begin{equation}\label{minor}\forall A \in \Rzn: \ \C A= \C A^\sy.
 \end{equation}

On the other hand, as effect of the nondegeneracy \eqref{distSO} and assumption
\eqref{approx} we have that $\C$ is positive definite on $\Rzs$. Indeed, by linearizing $d(\cdot,\SO)$ around identity
we have \cite[{(3.21)}]{Friesecke-et-al02}
\begin{equation}
\forall B \in \Rzn: \quad d(B,\SO) = |B^\sy {-} \I| + O(|B {-} \I|^2).\label{fri}
\end{equation}
Hence, given $A \in \Rzn$ and $\eta, \, \delta>0$, by choosing
$B=\I{+}\eta A$ in the latter we have
\begin{align}
c_1 |A^\sy|^2 &\stackrel{\eqref{fri}}{=} \lim_{\eta \to 0}
\frac{c_1}{\eta^2} d^2(I{+}\eta A,\SO)\nonumber\\
&\stackrel{\eqref{distSO}}{\leq} \lim_{\eta \to 0}
\frac{1}{\eta^2}\Wel(I{+}\eta A) \stackrel{\eqref{approx}}{\leq}
(1{+}\delta) | A|^2_\C  
\nonumber
\end{align}
so that, by taking $\delta \to 0$, we have
\begin{equation} \forall A \in \Rzn: \quad c_1|A^\sy|^2 \leq  |
  A|^2_\C = |
  A^\sy|^2_\C.\label{posdef}
\end{equation}

% Moreover, \eqref{distSO} implies that
% $$\frac12 A^\sy{:} \C A^\sy = |A^\sy|^2_\C \geq c_1 |A^\sy|^2.$$
Note that all assumptions \eqref{smoothness}-\eqref{approx} are consistent with the usual polyconvexity
framework
\begin{align}
&F \mapsto \Wel(F) \ \  \text{polyconvex,} \nonumber\\
%& \Wel(F) = \infty \ \ \text{for}    \ \ \det F \leq
%0,\nonumber\\
& \Wel(F) \to  \infty \ \  \text{for}    \ \ \det F \to
0.\nonumber
\end{align}

% Here is an example for $\Wel$ which should be included in our frame
% $$\Wel(F) = 
% \left\{
%   \begin{array}{ll}
% |F|^\alpha + \disp\frac{1}{(\det F)^\beta}\quad &\text{if} \det F >0\\
% \infty & \text{else,}
% \end{array}
% \right.
% $$
% for $\alpha \geq 2, \beta >0$.

Our assumptions on the hardening functional $\Wh: \Rzn \to [0,\infty]$ read
\begin{subequations}\label{energyh_ass}
  \begin{align}
    & \Wh(P):= \left\{
      \begin{array}{ll}
        \Whf(P) \quad &\text{if} \ \ P \in K,\\
        \infty &\text{if} \ \ P \in \Rzn\setminus K ,
      \end{array}
    \right.\label{quad}\\
    & \text{where} \ \ K \ \ \text{is compact in $\SL$ and contains a
      neighborhood of
      $\I$,}\label{compdom}\\
    & \Whf:\Rzn \to \Rz \ \ \text{is locally Lipschitz continuous and}\label{lip}\\
    & \exists \H \geq 0\ \forall \delta>0 \ \exists c_{\rm
      h}(\delta)>0\ \forall A \in B_{c_{\rm h}(\delta)}(0): \
    \big|\Whf(\I {+} A) - |A|_\H^2 \big| \leq
    \delta |A|_\H^2,\label{approx2}\\
    & \exists c_3>0 \ \forall A \in \Rzn: \ \Wh(\I {+} A) \geq c_3
    |A|^2.\label{coercwh}
  \end{align}
\end{subequations}
Note that by
assumption \eqref{compdom} we can find a constant $c_K>0$ such that 
\begin{align}
  &P \in K \ \ \Rightarrow \ \ |P| + |P^{-1}| \leq c_K, \label{conscomp}
  \\ &P \in \SL \setminus K \ \ \Rightarrow \ |P - \I|\geq \frac{1}{c_K}.\label{conscomp2}
\end{align}
The rather strong technical assumption on $\Wh$ that
its effective domain $K=\{P \in \SL \ | \ \Wh(P) < \infty\}$ fulfills
\eqref{conscomp} is crucial as it will provide $\LL^\infty$-bounds
that are essential in order to control the multiplicative terms $(\I{+}\epsi
\nabla u)(\I{+}\epsi z)^{-1}$.  Moreover, by combining
\eqref{approx2} and \eqref{coercwh} we check that 
\begin{equation}
  \label{posdef2}
  \forall A \in \Rzn: \quad  c_3|A|^2 \leq  | A|^2_\H.
\end{equation}

As for the dissipation we assume that
\begin{subequations}
  \begin{align}
    &  R^\dev: \Rzd \to  [0,\infty)  \ \ \text{convex and positively $1$-homogeneous,}\label{R1}\\
    &\forall P \in \Rzd: \ \quad c_4|P|\leq R^\dev(P)\leq c_5|P|,\label{R2}\\
    & R: \Rzn \to [0,\infty]; \ \ R(z):= \left\{
      \begin{array}{ll}
        R^\dev(z)\quad& \text{if}\ \ z \in \Rzd,\\
        \infty & \text{else,}
      \end{array}
    \right.\label{R3}
  \end{align}\label{diss_ass}
\end{subequations}
for positive $c_4,\, c_5$. Moreover, we define
\begin{align}
&  D: \Rzn \times \Rzn \to [0,\infty], \ \ \text{with} \ \  D(P,\haz P) =
D(\I,\haz P P^{-1}) \ \ \text{given by}\nonumber\\
&  D(\I,\haz P):= \inf \Bigg\{\int_0^1 R(\dot P P^{-1}) \dt \ \Big|
  \nonumber\\
&  \hspace{30mm}P\in \CC^1(0,1;\Rzn), \ P(0)=\I, \ P(1)=\haz P\Bigg\} .\label{Ddef}
\end{align}
 If $P$ is not invertible, we set $D(P,\haz P) = \infty$. 
Note in particular that $D(I,P)<\infty$ implies $\det P =1$. Moreover,
 there exists $c_6>0$ such that 
\begin{equation}
  \label{Dbound}
\forall P,\,Q \in K\subset \SL: \  D(P, Q ) \leq c_6, \quad D(\I, P)\leq c_6|P{-}\I|.
\end{equation}
For the first estimate the continuity of $D$ and the compactness of
$K$ is sufficient. For the second, we need to establish the estimate
only for $P$ close to $I$, where it follows from $D(I,P) \leq
R^\dev(\log P)\leq c_5|\log P|\leq c_6|P{-}I|$, since the matrix
logarithm is well-defined and Lipschitz continuous in a neighborhood
of $I$. See also \cite[Ex.\,3.2]{Mainik-Mielke09} and the references
given there for global bounds on $D$. 

The quasistatic evolution of the finite-strain and linearized
elastoplasticity systems are driven by the energy
functionals $\WW_\epsi, \, \WW_0: \QQ \to (-\infty,\infty]$ given by
\begin{align}
&\WW_\epsi(u,z):= \frac{1}{\epsi^2}\int_\Omega \Wel \big((\I{+}\epsi \nabla
  u)(\I {+} \epsi z)^{-1}\big) \dx + \frac{1}{\epsi^2}\int_\Omega \Wh(\I {+}
  \epsi z) \dx, \nonumber\\
&\WW_0(u,z):= \int_\Omega |\nabla u^\sy {-} z^\sy|_\C^2 
\dx + \int_\Omega|z|_\H^2 \dx.\nonumber
\end{align}
 Note that, if the second integral in the definition of
$\WW_\epsi(u,z)$ is finite, then $\I{+} \epsi z \in K$ almost
everywhere by \eqref{quad}. Hence, the inverse $(\I{+}\epsi z)^{-1}$
exists and the first integral is well defined.

We prescribe the generalized loading as
\begin{equation}\label{ell}
 \ell \in W^{1,1}(0,T;\UU')
\end{equation}
and, by letting $\ell_\epsi := \epsi \ell$, we introduce some
notation for the total energy functionals
 $\EE_\epsi, \, \EE_0: [0,T] \times \QQ \to (-\infty,\infty]$ 
 as
\begin{align}
&\EE_\epsi(t,u,z):= \WW_\epsi(u,z) - \frac{1}{\epsi}\lan \ell_\epsi(t), u \ran =\WW_\epsi(u,z) - \lan \ell(t), u \ran 
\nonumber\\
&\EE_0(t,u,z):=\WW_0(u,z)   - \lan \ell(t), u
\ran,\nonumber
\end{align}
Eventually, the dissipative character of the evolution is encoded into
the dissipation functions $D_\epsi, D_0: \Rzn\times \Rzn \to [0,\infty]$ and functionals $\DD_\epsi, \DD_0: (L^1(\Omega;\Rzn))^2
\to [0,\infty]$ given by
\begin{align}
 &D_\epsi(z_1,z_2):= \frac{1}{\epsi}D(\I{+}\epsi z_1, \I {+}\epsi
 z_2), \quad D_0(z_1,z_2):= R(z_2 {-} z_1),\nonumber\\
&\DD_\epsi(z_1,z_2):= \int_\Omega D_\epsi(z_1,z_2)  \dx, \quad
\DD_0(z_1,z_2):= \int_\Omega D_0(z_1,z_2)  \dx. \nonumber
\end{align}
The total dissipation of the process over the time interval
$[0,t]\subset [0,T]$
will be given by 
$$\Diss_{\DD_\epsi}(z;[0,t]) := \sup\left\{ \sum_{i=1}^N
  \DD_\epsi(z(t^i) , z(t^{i-1})) \ | \ \{0=t^0< \dots< t^N=t\}\right\}$$
where the sup is taken over all partitions of $[0,t]$.

From here on, we term   {\it
  Rate-Independent System} (RIS) the triple $({\mathcal
  Q},\EE_\epsi,\DD_\epsi)$ given by the choice of the state space ${\mathcal Q}$ and the energy and dissipation
functionals $\EE_\epsi$ and $\DD_\epsi$. The term {\it
  evolutionary $\Gamma$-convergence} refers to a suitable notion of
convergence for rate-independent systems in the spirit of \cite{mrs}
which in particular entails the convergence of the
respective energetic solutions. 

A crucial structure in the energetic formulation of RIS is the set $\SS_\epsi(t)$ of {\it stable
  states} at time $t \in [0,T]$, which is defined via
\begin{align}
 \SS_\epsi(t):=\Big\{(u,z)\in \QQ \ |&
    \ \EE_\epsi(t,u,z)
    < \infty  \ \ \text{and}\nonumber \\
    &\EE_\epsi(t,u,z) \leq \EE_\epsi(t,\haz u,\haz z) +
    \DD_\epsi( z ,\hat z) \quad  \forall (\haz u, \haz z)
    \in \QQ   \Big\}.\nonumber%\label{sstates}
\end{align}

Our assumption on the initial data reads
\begin{align}
 &\SS_\epsi(0)\ni (u^0_\epsi,z^0_\epsi) \to (u^0_0,z^0_0)  \ \ \text{weakly in} \ \
  \QQ, \ z_0^0 \in  \LL^2(\Omega; \Rzd),  \nonumber\\
 & \EE_\epsi(0,u^0_\epsi,z^0_\epsi)\to \EE_0(0,u^0_0,z^0_0).   \label{initial_data}
\end{align}
Note that the latter assumption is not empty as it is fulfilled at
least by the natural choice $(u_0,z_0)=(0,0)$ if $\ell(0)=0$.

\begin{definition}[Energetic solutions] Let $\epsi \geq 0$. We say
  that a trajectory $q_\epsi : [0,T] \to (u_\epsi,z_\epsi) \in \QQ$ is an
  \emph{energetic solution (related to the ${\rm RIS}$ $(\QQ,\EE_\epsi,\DD_\epsi)$)} if $(u_\epsi(0),z_\epsi(0)) =
  (u^0_\epsi,z^0_\epsi)$, the map $t
  \mapsto \lan\dot \ell, u_\epsi \ran$ is integrable, and, for all $t
  \in [0,T]$,
  \begin{align}
    & (u_\epsi(t),z_\epsi(t))\in \SS_\epsi(t),\label{stability}\\
    &\EE_\epsi(t,u_\epsi(t),z_\epsi(t)) + {\rm
      Diss}_{\DD_\epsi}(z_\epsi;[0,t]) =
    \EE_\epsi(0,u^0_\epsi,z^0_\epsi) - \int_0^t \lan \dot \ell,
    u_\epsi \ran \ds.\label{energy}
  \end{align}
An energetic solution will be called a \emph{finite-plasticity
  solution} if $\epsi>0$ and a \emph{linearized-plasticity
  solution} for $\epsi=0$.
\end{definition}

Note that linearized-plasticity solutions $(u_0,z_0)$ are unique as effect of the
quadratic and uniformly convex character of $\WW_0$. Moreover, from
assumption \eqref{ell} we get that 
$(u_0,z_0)\in {\rm W}^{1,1}(0,T;\QQ)$ and
$$\forall t \in [0,T]: \ \Diss_{\DD_0}(z_0;[0,t]) = \int_0^t R(\dot
z_0)\ds.$$
The reader is referred to \cite{Hill50,Lubliner90,Martin75} for some general introduction to plasticity
and to \cite{Han-Reddy,Johnson76,Suquet81} for the classical well-posedness theory for linearized elastoplasticity.

Our main result reads as follows and will be proved in Section
\ref{proof} as a special instance of the general theory of \cite{mrs}.

\begin{theorem}[Finite plasticity $\Gamma$-converges to linearized
  plasticity]
Assume \eqref{energy_ass}-\eqref{energyh_ass}, \eqref{diss_ass}, and
\eqref{ell}-\eqref{initial_data}. Let $(u_\epsi,z_\epsi)$ be a finite-plasticity
solution. Then, $(u_\epsi(t),z_\epsi(t)) \to (u_0(t),z_0(t))$ weakly
in $\QQ$ for all $t \in [0,T]$ where $(u_0,z_0)$
is the unique linearized-plasticity solution.% with $(u_0(0),z_0(0)) =
                                % (u^0_0,z^0_0)$. 
\label{main}
\end{theorem}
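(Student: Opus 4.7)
The plan is to cast Theorem~\ref{main} in the abstract evolutionary $\Gamma$-convergence framework of \cite{mrs}, which asserts that energetic solutions of a rate-independent system $(\QQ,\EE_\epsi,\DD_\epsi)$ converge (along subsequences) to an energetic solution of a limit system $(\QQ,\EE_0,\DD_0)$ provided that: (a) uniform a~priori bounds give weak compactness; (b) separate $\Gliminf$ inequalities hold for $\EE_\epsi \to \EE_0$ and for $\DD_\epsi \to \DD_0$; (c) the stable sets $\SS_\epsi(t)$ enjoy a closedness property along stable trajectories, which in practice is verified through a \emph{mutual recovery sequence}; (d) the initial data are well-prepared, as exactly encoded in \eqref{initial_data}. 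Uniqueness of the linearized-plasticity solution then upgrades subsequential convergence to convergence of the whole family.

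For (a), I would feed the energy-dissipation balance \eqref{energy} into Gronwall's lemma using \eqref{ell}, obtaining uniform bounds for $\sup_t \WW_\epsi(u_\epsi(t),z_\epsi(t))$ and $\Diss_{\DD_\epsi}(z_\epsi;[0,T])$. Coercivity \eqref{distSO} then yields
\begin{equation*}
c_1 \int_\Omega \dist^2\!\big((\I{+}\epsi \nabla u_\epsi)(\I{+}\epsi z_\epsi)^{-1},\SO\big)\dx \leq \epsi^2\WW_\epsi(u_\epsi,z_\epsi),
\end{equation*}
which, combined with the $\LL^\infty$-bound on $\I{+}\epsi z_\epsi$ from \eqref{quad}--\eqref{conscomp} and the geometric rigidity estimate of \cite{Friesecke-et-al02}, delivers a uniform $\LL^2$-bound on $\nabla u_\epsi$; \eqref{coercwh} gives the one on $z_\epsi$. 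For (b), the dissipation $\Gliminf$ is immediate from the pointwise limit $D_\epsi(z_1,z_2) \to R(z_2{-}z_1)$ secured by \eqref{Dbound}, together with convexity and positive $1$-homogeneity of $R$. The energy $\Gliminf$ would follow the strategy of \cite{DalMaso-et-al02}: polar-decompose the elastic factor with the help of \eqref{fri}, apply \eqref{approx} on the set where the Hencky strain is small, control the residual through the Mandel bound \eqref{Mandel}, and invoke the minor symmetries \eqref{minor} to discard antisymmetric contributions, ending with $|\nabla u^\sy {-} z^\sy|_\C^2$.

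The core of the argument is the mutual recovery sequence. Given a stable state $(u_\epsi(t),z_\epsi(t))\in \SS_\epsi(t)$ with weak limit $(u_0(t),z_0(t))$ and an arbitrary test pair $(\haz u,\haz z)\in \QQ$ satisfying $\haz z {-} z_0(t)\in \LL^2(\Omega;\Rzd)$, I would set
\begin{equation*}
\I + \epsi \haz z_\epsi := \big(\I + \epsi z_\epsi(t)\big)\big(\I + \epsi(\haz z{-}z_0(t))\big), \qquad \haz u_\epsi := u_\epsi(t) + \haz u - u_0(t),
\end{equation*}
possibly after truncating $\haz z{-}z_0(t)$ on a set of small measure to ensure $\I + \epsi(\haz z{-}z_0(t)) \in \GLp$ pointwise. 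The deviatoric character of $\haz z{-}z_0(t)$ yields $\det(\I{+}\epsi \haz z_\epsi) = \det(\I{+}\epsi z_\epsi(t)) = 1$, while the $\LL^\infty$-bound on $\epsi z_\epsi(t)$ from \eqref{conscomp} gives $\haz z_\epsi \rightharpoonup \haz z$ weakly in $\LL^2$ and $\haz u_\epsi \rightharpoonup \haz u$ weakly in $\UU$. The multiplicative definition is tailored precisely so that $D(\I{+}\epsi z_\epsi(t),\I{+}\epsi\haz z_\epsi) = D(\I,\I{+}\epsi(\haz z{-}z_0(t)))$, whence by dominated convergence and \eqref{Dbound}
\begin{equation*}
\DD_\epsi(z_\epsi(t),\haz z_\epsi) = \tfrac{1}{\epsi}\!\int_\Omega D\big(\I,\I+\epsi(\haz z{-}z_0(t))\big)\dx \to \int_\Omega R(\haz z{-}z_0(t))\dx.
\end{equation*}
It then remains to verify
\begin{equation*}
\limsup_{\epsi \to 0} \big[\WW_\epsi(\haz u_\epsi,\haz z_\epsi) - \WW_\epsi(u_\epsi(t),z_\epsi(t))\big] \leq \WW_0(\haz u,\haz z) - \WW_0(u_0(t),z_0(t)),
\end{equation*}
which I would establish by expanding $(\I{+}\epsi\nabla\haz u_\epsi)(\I{+}\epsi \haz z_\epsi)^{-1}$ around the elastic factor of $(u_\epsi(t),z_\epsi(t))$, applying \eqref{approx} on the set where the increment is small, and controlling the remainder through \eqref{Mandel}.

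The main obstacle will be this last step: the noncommutative multiplicative correction between the stable state and its competitor must be controlled uniformly in $\epsi$, \emph{and simultaneously} the plastic dissipation must be matched exactly in the limit. Once closedness of the stable sets is secured along these lines, \cite{mrs} produces a linearized-plasticity solution as weak limit of every subsequence of $(u_\epsi,z_\epsi)$, and uniqueness of such a solution — which is a consequence of the quadratic, uniformly convex form of $\WW_0$ via \eqref{posdef}, \eqref{posdef2} and Korn's inequality — upgrades this to convergence of the entire family, proving Theorem~\ref{main}.
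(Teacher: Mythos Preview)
Your overall strategy --- verify the abstract hypotheses of \cite{mrs} via coercivity, two $\Gliminf$ inequalities, and a mutual recovery sequence, then invoke uniqueness of the linearized solution --- is exactly the route the paper takes. The coercivity and compactness discussion in part~(a) is correct and matches Lemma~\ref{encoerc}. However, your proposed mutual recovery sequence contains three concrete errors that make the construction unusable as written.

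\textbf{Determinant constraint.} You claim that the deviatoric character of $\haz z - z_0(t)$ yields $\det\!\big(\I{+}\epsi(\haz z{-}z_0(t))\big)=1$. This is false: $\tr A = 0$ gives $\det(\I{+}\epsi A) = 1 + O(\epsi^2)$, not $1$. Consequently $\I{+}\epsi \haz z_\epsi \notin \SL$, so by \eqref{quad} we have $\Wh(\I{+}\epsi \haz z_\epsi) = \infty$ and hence $\EE_\epsi(t,\haz u_\epsi,\haz z_\epsi)=\infty$; the recovery inequality is vacuous. This is precisely why the paper (Lemma~\ref{mutual}) uses the matrix \emph{exponential}, $\I{+}\epsi \haz z_\epsi = \exp(\epsi \tilde z)(\I{+}\epsi z_\epsi)$, which gives $\det \exp(\epsi\tilde z)=\exp(\epsi\,\tr\tilde z)=1$ exactly.

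\textbf{Side of multiplication.} With your right-multiplicative definition $\haz P = PQ$, the invariance $D(P,\haz P)=D(\I,\haz P P^{-1})$ produces $D(\I,PQP^{-1})$, not $D(\I,Q)$; your displayed dissipation identity is therefore incorrect unless $D$ is conjugation-invariant, which is not assumed. The paper multiplies on the \emph{left}, so that $\haz P P^{-1}=\exp(\epsi\tilde z)$ independently of $z_\epsi$, and the dissipation limit \eqref{D0} follows cleanly.

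\textbf{Additive displacement competitor.} Defining $\haz u_\epsi = u_\epsi(t)+\haz u - u_0(t)$ gives $G_{1,\epsi}:=(\I{+}\epsi\nabla\haz u_\epsi)(\I{+}\epsi\nabla u_\epsi)^{-1} = \I + \epsi\nabla\tilde u\,(\I{+}\epsi\nabla u_\epsi)^{-1}$, and since $\nabla u_\epsi$ is only bounded in $\LL^2$ you have no $\LL^\infty$ control on $G_{1,\epsi}-\I$. Without $|G_{1,\epsi}-\I|\leq c\epsi$ the key multiplicative estimate \eqref{key} cannot be invoked on the bad set $\Omega\setminus U_\epsi^\delta$, and the elastic-energy $\limsup$ fails. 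The paper instead takes smooth compactly supported $(\tilde u,\tilde z)$ (sufficient by density) and defines $\haz u_\epsi$ through the \emph{composition} $\psi_\epsi\circ\varphi_\epsi$ with $\psi_\epsi=\id+\epsi\tilde u$, which yields $G_{1,\epsi}-\I = \epsi\nabla\tilde u(\varphi_\epsi)$ and hence the required $\LL^\infty$ smallness.

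In short, the multiplicative structure of finite plasticity forces the recovery sequence to be built multiplicatively: matrix exponential on the plastic side (to preserve $\SL$ and decouple the dissipation), composition on the deformation side (to obtain $\LL^\infty$-small outer multipliers for the Mandel estimate). Your additive/right-multiplicative ansatz, while natural from the linear viewpoint, breaks all three of these mechanisms.
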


Theorem \ref{main} is exclusively a convergence result. In particular,
we {\it assume} that finite-plasticity solutions
exist. Note however that the existence of finite-plasticity solutions is presently not known within our minimal assumption
frame. A possibility here would be that of considering directly some more
regular situations including extra
compactifying terms like $|\nabla F_{\rm pl}|^r \, (r>1)$ such that finite-plasticity solutions exist \cite{Mainik-Mielke09}. We shall not follow this line here but rather
present a second result based on {\it approximate
minimizers} of the related incremental problems. Indeed, given the time
partitions $\{0=t^i_\epsi< \dots< t^{N_\epsi}_\epsi=T\}$ with
diameters $\tau_\epsi:=\max_{i=1, \dots, N_\epsi} (t^i_\epsi - t^{i-1}_\epsi) \to
0$ as $\epsi \to 0$, the (iterative) incremental problem 
$$(u^i_\epsi,z^i_\epsi)\in \Argmin_{(u,v)\in \QQ}\big( \EE_\epsi(t^i_\epsi,u,z)
    + \DD_\epsi(z^{i-1}_\epsi,z)\big)\quad \text{for} \ \ i=1, \dots,N_\epsi$$
may not be solvable (cf. \cite{Carstensen-et-al02}, still see
\cite{Mielke04,Mielke-Mueller06} for some additional discussion). Hence, following \cite[Sec. 4]{mrs} we fix a sequence $0<\alpha_\epsi \to 0$ in
order to control the 
tolerances for the minimizations and consider the
following {\it approximate incremental problem}
\begin{equation}
  \label{AIP}
  \begin{array}{l}
  \text{Find iteratively} \ \ (u^i_\epsi,z^i_\epsi)\in \QQ \ \ \text{such
    that} \\ \EE_\epsi(t^i_\epsi,u^i_\epsi,z^i_\epsi)
    + \DD_\epsi(z^{i-1}_\epsi,z^i_\epsi) \\
\leq (t^i_\epsi - t^{i-1}_\epsi)\alpha_\epsi +
    \inf_{(u,v)\in \QQ} \big( \EE_\epsi(t^i_\epsi,u,z)
    + \DD_\epsi(z^{i-1}_\epsi,z)\big).
  \end{array}
\end{equation}
By the definition of infimum the latter always admits solutions and we
will show the following convergence result.

\begin{theorem}[Convergence of approximate incremental
  minimizers] Under the assumptions of Theorem \emph{\ref{main}} let $(u^i_\epsi,z^i_\epsi)$ be
approximate incremental minimizers and  $(\ove u_\epsi,\ove
z_\epsi)$ be the corresponding right-continuous, piecewise-constant interpolants on
the time partitions. Then,  $(\ove u_\epsi (t),\ove z_\epsi(t))
\to (u_0(t),z_0(t))$ weakly in $\QQ$ for all $t \in [0,T]$   where $(u_0,z_0)$
is the unique linearized-plasticity solution. \label{main2} 
\end{theorem}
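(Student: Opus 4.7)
The plan is to recover Theorem~\ref{main2} as a direct application of the abstract evolutionary $\Gamma$-convergence framework for \emph{approximate} time-incremental minimizers developed in \cite[Sec.~4]{mrs}. The hypotheses of that abstract theorem coincide, up to the handling of the vanishing tolerance $\alpha_\epsi$, with those needed for Theorem~\ref{main}: separate $\Gliminf$ inequalities $\WW_\epsi \xrightarrow{\Gamma} \WW_0$ and $\DD_\epsi \xrightarrow{\Gamma} \DD_0$, compactness of sublevels of $\EE_\epsi$, continuity in $t$ through the power functional $\langle \dot\ell, u\rangle$, and, critically, the construction of a \emph{mutual recovery sequence} relating energy and dissipation. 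All of these ingredients are precisely those we expect to establish in the proof of Theorem~\ref{main} in Section~\ref{proof}, so the task here is to show that the additional tolerance $\alpha_\epsi \to 0$ does not spoil the argument.

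First I would derive uniform a priori bounds. Testing \eqref{AIP} with $(u,z)=(u^{i-1}_\epsi,z^{i-1}_\epsi)$, using $\DD_\epsi(z^{i-1}_\epsi,z^{i-1}_\epsi)=0$ and summing, one obtains a discrete energy inequality
$$\EE_\epsi(t^i_\epsi, u^i_\epsi, z^i_\epsi) + \sum_{j=1}^{i} \DD_\epsi(z^{j-1}_\epsi, z^{j}_\epsi) \le \EE_\epsi(0,u^0_\epsi,z^0_\epsi) - \int_0^{t^i_\epsi} \langle \dot\ell, \bar u_\epsi\rangle\ds + T\alpha_\epsi + o(1),$$
where the $o(1)$ accounts for the usual Riemann-sum error in the power integral. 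Combining this with \eqref{initial_data}, \eqref{distSO}, \eqref{coercwh}, and Korn's inequality (recalling that $u=0$ on $\Gamma$ with $\mathcal{H}^{d-1}(\Gamma)>0$), a discrete Gronwall argument yields that $\EE_\epsi(t,\bar u_\epsi(t),\bar z_\epsi(t))$ and $\Diss_{\DD_\epsi}(\bar z_\epsi;[0,T])$ are bounded uniformly in $\epsi$ and $\tau_\epsi$, and hence $\bar u_\epsi(t)$ is bounded in $\UU$ and $\bar z_\epsi(t)$ in $\ZZ$ uniformly in $t$.

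Second, I would rewrite the minimality in \eqref{AIP} as an \emph{approximate stability inequality}: for every test $(\hat u, \hat z) \in \QQ$,
$$\EE_\epsi(t^i_\epsi, u^i_\epsi, z^i_\epsi) \le \EE_\epsi(t^i_\epsi, \hat u, \hat z) + \DD_\epsi(z^{i-1}_\epsi, \hat z) - \DD_\epsi(z^{i-1}_\epsi, z^i_\epsi) + \tau^i_\epsi\alpha_\epsi.$$
Using the triangle inequality for $\DD_\epsi$ (which holds because $D_\epsi$ derives from the infimum \eqref{Ddef}) one deduces a Mielke--Theil type bound leading both to an upper energy estimate and, after appealing to the approximate stability together with a mutual recovery sequence, to the matching lower one in the limit $\epsi\to 0$. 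A Helly selection theorem applied to $\bar z_\epsi$ (with respect to $\DD_\epsi$), combined with pointwise weak compactness in $\UU$ for $\bar u_\epsi$, then extracts, along a subsequence, a limit pair $(u,z)$ with $\bar z_\epsi(t)\rightharpoonup z(t)$ and $\bar u_\epsi(t)\rightharpoonup u(t)$ in $\QQ$ for every $t\in[0,T]$.

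The main obstacle, and the one that distinguishes this analysis from the stationary arguments in \cite{DalMaso-et-al02,Schmidt08,Paroni-Tomassetti09}, is the construction of the mutual recovery sequence: given stable $(u_0(t),z_0(t))$ at the linearized level and any test $(\hat u_0,\hat z_0)$, one must produce $(\hat u_\epsi,\hat z_\epsi)\in \QQ$ so that
$$\limsup_{\epsi \to 0}\Bigl(\EE_\epsi(t,\hat u_\epsi,\hat z_\epsi) - \EE_\epsi(t,\bar u_\epsi(t),\bar z_\epsi(t)) + \DD_\epsi(\bar z_\epsi(t),\hat z_\epsi)\Bigr) \le \EE_0(t,\hat u_0,\hat z_0)-\EE_0(t,u_0(t),z_0(t))+\DD_0(z_0(t),\hat z_0).$$
The difficulty is that the recovery must respect the multiplicative group structure: one has to build $\I+\epsi\hat z_\epsi\in K\subset \SL$ (so in particular $\det=1$), realise $(\I+\epsi \hat z_\epsi)(\I+\epsi \bar z_\epsi(t))^{-1}$ as an almost-commuting perturbation yielding the correct quadratic dissipation $\int_\Omega R(\hat z_0 - z_0(t))\dx$, and at the same time ensure that $(\I+\epsi\nabla\hat u_\epsi)(\I+\epsi\hat z_\epsi)^{-1}$ produces the linearized elastic energy density $|\nabla\hat u_0^\sy - \hat z_0^\sy|_\C^2$. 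The correct ansatz is a suitable multiplicative perturbation $\I+\epsi \hat z_\epsi := (\I+\epsi \hat z_0)(\I+\epsi \bar z_\epsi(t))^{-1}(\I+\epsi \bar z_\epsi(t))$ truncated so as to stay in $K$, combined with $\hat u_\epsi := u_0(t)+\hat u_0-\bar u_\epsi(t)+(\text{corrector})$, and uses assumptions \eqref{approx}, \eqref{approx2}, \eqref{Mandel}, \eqref{conscomp}, and the Lipschitz bound \eqref{Dbound}. Once this construction is in place---exactly as for Theorem~\ref{main}---the tolerance $\alpha_\epsi$ enters only as the additive vanishing term $T\alpha_\epsi$ in the energy balance, and the abstract machinery of \cite[Sec.~4]{mrs} delivers that every limit pair is an energetic solution of $(\QQ,\EE_0,\DD_0)$. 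Uniqueness of the linearized-plasticity solution (thanks to strict convexity of $\WW_0$ via \eqref{posdef}--\eqref{posdef2}) then promotes subsequential convergence to the convergence of the whole sequence, proving the theorem.
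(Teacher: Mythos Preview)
Your overall strategy is correct and matches the paper's own argument in Section~\ref{sec:proof2}: one simply re-runs the proof of Theorem~\ref{main} with approximate stability replacing exact stability, and the extra term $\tau_\epsi\alpha_\epsi\to 0$ disappears harmlessly in the $\limsup$ inequality~\eqref{crux}. The a-priori bounds, Helly-type selection, and the uniqueness argument are as you describe.

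However, the specific mutual recovery sequence ansatz you write down is not viable. Your plastic ansatz $(\I{+}\epsi \haz z_0)(\I{+}\epsi \ove z_\epsi(t))^{-1}(\I{+}\epsi \ove z_\epsi(t))$ collapses algebraically to $\I{+}\epsi \haz z_0$, which neither depends on $\ove z_\epsi$ nor lies in $K$ in general. More seriously, your additive displacement ansatz $\haz u_\epsi = u_0(t)+\haz u_0-\ove u_\epsi(t)+(\text{corrector})$ cannot deliver the crucial cancellation on the ``bad'' set where $\nabla \ove u_\epsi$ is large: to apply Lemma~\ref{lemmakey} (i.e.\ assumption~\eqref{Mandel}) one must write $\haz F_{\el,\epsi}=G_{1,\epsi}F_{\el,\epsi}G_{2,\epsi}$ with $G_{j,\epsi}{-}\I$ small in $\LL^\infty$, but an additive perturbation produces $G_{1,\epsi}{-}\I = \epsi\nabla\tilde u\,(\I{+}\epsi\nabla \ove u_\epsi)^{-1}$, and there is no $\LL^\infty$ control on $(\I{+}\epsi\nabla \ove u_\epsi)^{-1}$. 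The paper's construction in Lemma~\ref{mutual} instead uses the \emph{composition} $\haz u_\epsi=\frac{1}{\epsi}(\psi_\epsi\circ\varphi_\epsi-\id)$ with $\psi_\epsi=\id{+}\epsi\tilde u$, $\varphi_\epsi=\id{+}\epsi u_\epsi$, and the \emph{matrix exponential} $\I{+}\epsi\haz z_\epsi=\exp(\epsi\tilde z)(\I{+}\epsi z_\epsi)$ (cut off to $K$), for a fixed smooth increment $(\tilde u,\tilde z)$. These choices yield exactly $G_{1,\epsi}{-}\I=\epsi\nabla\tilde u(\varphi_\epsi)$ and $G_{2,\epsi}=\exp(-\epsi\tilde z)$, both $O(\epsi)$ in $\LL^\infty$, which is what makes the bad-set estimate~\eqref{key_conv} go through.
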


In the finite-elasticity case (stationary), using ideas from \cite{DalMaso-et-al02} the convergence of
approximate minimizers has been considered in \cite{Paroni-Tomassetti09}.

%%%%%%%%%%%%%%%%%%%%%%%%%%%%%%%%%
%%%%%%%%%%%%%%%%%%%%%%%%%%%%%%%%%

\section{Proofs}\label{proof}
\setcounter{equation}{0}

The argument basically follows the lines of the abstract analysis of
\cite{mrs}. Still, our setting cannot be completely recovered from the
application of the above-mentioned abstract theory as extra care is
needed for the treatment of the multiplicative nonlinearities. We hence resort in providing
here an independent proof. After establishing the coercivity of
the energy in Subsection \ref{sec:coerc}, the proof strategy relies in
providing {\it two separate $\Gliminf$} inequalities for $\EE_\epsi$ and
$\DD_\epsi$ and a {\it mutual recovery sequence} argument relating
both. This is done in Subsections \ref{sec:Gliminf} and
\ref{sec:mutual} below. Eventually, the proofs of Theorems \ref{main}
and \ref{main2} are
outlined in Subsections \ref{sec:proof} and  \ref{sec:proof2}, respectively.

A caveat on notation: henceforth the symbol $c$
stands for any positive constant  independent of $\epsi$ and $\delta$ but possibly
depending on the fixed data. In particular, note that $c$ may change from line
to line. Moreover, in the following we use the short-hand
notation, for all $A\in \Rzn$,
\begin{align}
& \Wel^\epsi(A):= \frac{1}{\epsi^2}\Wel(\I {+} \epsi A), \quad
\Wh^\epsi(A):= \frac{1}{\epsi^2}\Wh(\I {+} \epsi A), \quad
\Whf^\epsi(A):= \frac{1}{\epsi^2}\Whf(\I {+} \epsi A).\nonumber
\end{align}

%%%%%%%%%%%%%%%%%%%%%%%%%%%%%%%%%%%
\subsection{Energy coercivity}\label{sec:coerc}

We start by providing a uniform coercivity result for the energy. It
follows the ideas in \cite{DalMaso-et-al02} and relies on the Rigidity
Lemma \cite[Thm. 3.1]{Friesecke-et-al02}.

\begin{lemma}[Coercivity] There exists $c>0$ such that, for all $(u,z) \in \QQ$\label{encoerc} 
  \begin{equation}
 \|\nabla u \|^2_{\LL^2} +  \| z\|^2_{\LL^2} + \| \epsi
 z\|^2_{\LL^\infty} \leq  c\big(1{+} \WW_\epsi(u,z)\big). \label{bound}
  \end{equation}
\end{lemma}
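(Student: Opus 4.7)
The plan is to exploit the two terms of $\WW_\epsi(u,z)$ in turn: the hardening integral yields the bounds on $z$, and the elastic integral, through geometric rigidity combined with the partial Dirichlet condition on $\Gamma$, yields the bound on $\nabla u$.

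First, by the pointwise lower bound \eqref{coercwh},
\begin{equation*}
  \frac{1}{\epsi^2}\int_\Omega \Wh(\I{+}\epsi z)\dx \;\geq\; c_3 \|z\|^2_{\LL^2}.
\end{equation*}
Moreover, finiteness of $\WW_\epsi(u,z)$ forces $\I{+}\epsi z \in K$ almost everywhere in $\Omega$, so that \eqref{conscomp} yields $|\I{+}\epsi z|\leq c_K$ pointwise and hence $\|\epsi z\|_{\LL^\infty}\leq c_K {+} \sqrt d$. This handles two of the three terms on the left-hand side of \eqref{bound}.

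Second, to control $\nabla u$, I would apply \eqref{distSO} to the elastic part $F_{\rm el,\epsi}:=(\I{+}\epsi\nabla u)(\I{+}\epsi z)^{-1}$, obtaining $\int_\Omega \dist^2(F_{\rm el,\epsi},\SO)\dx \leq c\,\epsi^2\WW_\epsi$. Since $\I{+}\epsi\nabla u = F_{\rm el,\epsi}(\I{+}\epsi z)$, for any $R\in\SO$ the algebraic identity $\I{+}\epsi\nabla u - R = (F_{\rm el,\epsi}{-}R)(\I{+}\epsi z) + \epsi Rz$, combined with the $\LL^\infty$-bound on $\I{+}\epsi z$, yields
\begin{equation*}
  \dist(\I{+}\epsi\nabla u,\SO) \;\leq\; c_K\, \dist(F_{\rm el,\epsi},\SO) + \epsi|z|,
\end{equation*}
and integration together with the $\LL^2$-bound on $z$ already obtained produces
\begin{equation*}
  \int_\Omega \dist^2(\I{+}\epsi\nabla u,\SO)\dx \;\leq\; c\,\epsi^2\big(1{+}\WW_\epsi(u,z)\big).
\end{equation*}

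Third, I would invoke the Friesecke-James-M\"uller rigidity estimate \cite{Friesecke-et-al02} on the deformation $\varphi_\epsi:=\id{+}\epsi u$. Since the rotation it delivers depends on $\epsi$, I would anchor it to $\I$ via the extension device of \cite{DalMaso-et-al02}: using that $\Gamma$ is relatively open in $\partial\Omega$ with $\mathcal H^{d-1}(\Gamma)>0$ and that $u=0$ on $\Gamma$, extend $\varphi_\epsi$ by $\id$ across $\Gamma$ into a fixed larger Lipschitz domain $\tilde\Omega\supset\Omega$. Applying rigidity on $\tilde\Omega$ produces one $R_\epsi\in\SO$ with
\begin{equation*}
  \|\nabla\varphi_\epsi - R_\epsi\|_{\LL^2(\tilde\Omega)}^2 \;\leq\; c \int_\Omega \dist^2(\I{+}\epsi\nabla u,\SO)\dx \;\leq\; c\,\epsi^2(1{+}\WW_\epsi),
\end{equation*}
where I have used that $\nabla\varphi_\epsi=\I\in\SO$ on $\tilde\Omega\setminus\Omega$. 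The very same equality shows that the left-hand side dominates $|\tilde\Omega{\setminus}\Omega|\,|\I{-}R_\epsi|^2$, whence $|\I{-}R_\epsi|\leq c\,\epsi\sqrt{1{+}\WW_\epsi}$. A final triangle inequality then yields $\|\epsi\nabla u\|^2_{\LL^2} = \|\nabla\varphi_\epsi - \I\|^2_{\LL^2(\Omega)} \leq c\,\epsi^2(1{+}\WW_\epsi)$, hence $\|\nabla u\|^2_{\LL^2}\leq c(1{+}\WW_\epsi)$, closing the proof.

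The main obstacle is the third step: the rigidity inequality alone returns a rotation that a priori can be far from $\I$, and the partial Dirichlet condition on $\Gamma$ must be leveraged to anchor it. The extension trick achieves exactly this by converting the boundary information on $\Gamma$ into a bulk distance from $\I$ on the auxiliary region $\tilde\Omega\setminus\Omega$; this is where the geometric assumptions on $\Omega$ and $\Gamma$ genuinely enter.
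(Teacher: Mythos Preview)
Your proof is correct and follows essentially the same route as the paper: bound $z$ via the hardening term, transfer the elastic distance estimate from $F_{\rm el,\epsi}$ to $\nabla\varphi_\epsi$ using the $\LL^\infty$-bound on $\I{+}\epsi z$, then apply geometric rigidity and anchor the resulting rotation to $\I$ via the Dirichlet condition on $\Gamma$. The only cosmetic difference is in the anchoring step: the paper applies rigidity on $\Omega$ and then invokes \cite[Prop.~3.4]{DalMaso-et-al02} to conclude $|\hat Q{-}\I|^2\le c\,\epsi^2\WW_\epsi$ from $\varphi_\epsi|_\Gamma=\id$, whereas you first extend $\varphi_\epsi$ by $\id$ across $\Gamma$ and apply rigidity on the enlarged domain so that the bound on $|R_\epsi{-}\I|$ falls out directly from the bulk contribution on $\tilde\Omega\setminus\Omega$; the two devices are interchangeable.
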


\begin{proof}
Let us assume with no loss of generality that
$\WW_\epsi(u,z)<\infty$,  so that $\I{+} \epsi z\in K$ almost
everywhere by assumption \eqref{quad}.  Hence, $|\I {+} \epsi z| \leq c_K$ almost everywhere
from property \eqref{conscomp}  and the inverse $(\I{+}\epsi
z)^{-1}$ exists almost everywhere.  Thus, we have that  $\|\epsi z\|_{\LL^\infty} \leq c$. Moreover, one
readily checks from the coercivity \eqref{coercwh} that 
\begin{equation}\label{tostack}c_3\| z\|_{\LL^2}^2 \leq \int_\Omega \Wh^\epsi(z) \dx\leq
\WW_\epsi(u,z).
\end{equation}

For the displacement $u$ we follow ideas from \cite{DalMaso-et-al02}. Given any $Q
\in \SO$ by letting $\varphi=\id{+}\epsi u$ and $F_{\rm el}= \nabla
\varphi (\I{+}\epsi z)^{-1}$ we have 
\begin{align}
  |\nabla \varphi {-} Q|^2 &=  |\nabla \varphi - Q(\I {+}\epsi z) + \epsi Q
  z|^2 = |(F_\el{-} Q) (\I{+}\epsi z) + \epsi Qz|^2 \nonumber\\
&\leq c\big( |F_\el{-} Q|^2|\I{+}\epsi z|^2 + \epsi^2 |z|^2\big) \leq c
\big(   |F_\el{-} Q|^2 + \epsi^2 |z|^2\big).\nonumber
\end{align}
In particular, by passing to the infimum for $Q \in \SO$ we have checked that
$$ \dist^2(\nabla \varphi,\SO)\leq c
\big(   \dist^2(F_\el,\SO) + \epsi^2|z|^2 \big).$$
By taking the integral in space and using the nondegeneracy condition \eqref{distSO}
we obtain that 
\begin{align}
\int_\Omega\dist^2(\nabla \varphi,\SO) \dx&\leq c
\int_\Omega  \dist^2(F_\el,\SO) \dx + c\epsi^2\int_\Omega |z|^2
\dx\nonumber\\
&\stackrel{\eqref{tostack}}{\leq} \epsi^2 c  
\WW_\epsi(u,z).\nonumber
\end{align}
Hence, the Rigidity Lemma \cite[Thm. 3.1]{Friesecke-et-al02} ensures that
$$\| \nabla\varphi {-} \haz Q\|_{\LL^2}^2 \leq
 \epsi^2 c  
\WW_\epsi(u,z)$$
for some constant rotation $\haz Q \in \SO$. Finally, using
\cite[Prop. 3.4]{DalMaso-et-al02} and $\varphi|_\Gamma = \id$ as $u
\in \UU$, we conclude $|\haz Q {-}
\I |^2 \leq
\epsi^2 c  \WW_\epsi(u,z)$. Then, we have 
$$\| \nabla u \|_{\LL^2}^2 = \frac{1}{\epsi^2} \| \nabla \varphi {-} \I
\|_{\LL^2}^2\leq \frac{2}{\epsi^2}\| \nabla \varphi {-} \haz Q
\|_{\LL^2}^2 +  \frac{2}{\epsi^2}\| \haz Q {-} \I
\|_{\LL^2}^2 \leq c  \WW_\epsi(u,z)$$
and the bound \eqref{bound} follows.
\end{proof}

%%%%%%%%%%%%%%%%%%%%%%%%%%%%%%
\subsection{$\boldsymbol \Gliminf$ inequalities}\label{sec:Gliminf}

Next, we turn our attention to the proof of the separate $\Gliminf$
inequalities for energy and dissipation. Let us start with a
statement concerning the energy densities.

\begin{lemma} Under assumptions \eqref{approx} and \eqref{approx2}, we
  have 
  \begin{equation}\label{uni}\Wel^\epsi \to |\cdot|_\C^2 \ \
    \text{and} \ \ \Whf^\epsi\to|\cdot|_\H^2 \ \ \text{locally uniformly}. 
\end{equation}
Moreover, we have 
\begin{equation}\label{inq}|z|^2_\H \leq \inf\Big\{\liminf_{\epsi \to 0 } \Wh^\epsi(z_\epsi) \ \big| \
  z_\epsi \to z\Big\}.
\end{equation}\label{primo}
\end{lemma}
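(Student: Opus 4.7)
My strategy is to reduce both assertions to the quadratic Taylor estimates \eqref{approx} and \eqref{approx2} via the elementary rescaling $A \mapsto \epsi A$, paying attention only to the fact that $\Wh$ differs from $\Whf$ outside the compact constraint set $K$.

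First I would establish the locally uniform convergence in \eqref{uni}. Substituting $\epsi A$ for $A$ in \eqref{approx}, dividing by $\epsi^2$, and exploiting the bilinearity of $\C$ gives
\[
\big|\Wel^\epsi(A) - |A|_\C^2\big| \leq \delta\,|A|_\C^2 \qquad \text{whenever } |\epsi A| \leq c_{\rm el}(\delta).
\]
On a compact set $K'\subset\Rzn$ with $\sup_{K'}|A|\leq M$ the smallness requirement $|\epsi A|\leq c_{\rm el}(\delta)$ is automatic as soon as $\epsi \leq c_{\rm el}(\delta)/M$, so the convergence is uniform on $K'$. For such $\epsi$ the matrix $\I{+}\epsi A$ also lies in $\GLp$, so $\Wel^\epsi$ is actually finite on $K'$ and the statement is meaningful. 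The identical rescaling of \eqref{approx2} yields the analogous claim for $\Whf^\epsi$.

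For the liminf inequality \eqref{inq} the key observation is the pointwise bound $\Wh \geq \Whf$ on $\Rzn$: on $K$ the two functions coincide by \eqref{quad}, while off $K$ the left-hand side equals $+\infty$ and the right-hand side is finite by \eqref{lip}. Rescaling gives $\Wh^\epsi \geq \Whf^\epsi$ everywhere. Now, given any sequence $z_\epsi \to z$ in $\Rzn$, the tail lies in a compact set, so the preceding local uniform convergence together with continuity of $|\cdot|_\H^2$ forces $\Whf^\epsi(z_\epsi) \to |z|_\H^2$. Combining this with $\Wh^\epsi(z_\epsi) \geq \Whf^\epsi(z_\epsi)$ yields
\[
|z|^2_\H \;=\; \lim_{\epsi\to 0}\Whf^\epsi(z_\epsi) \;\leq\; \liminf_{\epsi\to 0}\Wh^\epsi(z_\epsi),
\]
and taking the infimum over admissible sequences delivers \eqref{inq}.

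There is no real obstacle here; the only subtlety worth flagging is that sequences for which $\I{+}\epsi z_\epsi$ fails to lie in $K$—which in particular occurs when the limit $z$ is not deviatoric—produce $\Wh^\epsi(z_\epsi)=+\infty$ and are therefore harmless for the liminf estimate. This is precisely why routing the argument through the extended functional $\Whf^\epsi$, to which \eqref{approx2} applies without any compatibility condition with the manifold $\SL$, is the right way to proceed.
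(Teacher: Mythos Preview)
Your proposal is correct and follows essentially the same route as the paper: rescale the quadratic estimates \eqref{approx}--\eqref{approx2} to get locally uniform convergence, then for \eqref{inq} reduce to $\Whf^\epsi$ on the compact tail of the sequence. The only cosmetic difference is that the paper handles the constraint set $K$ by assuming without loss of generality that $\sup_\epsi\Wh^\epsi(z_\epsi)<\infty$ (so that $\Wh^\epsi=\Whf^\epsi$ along the sequence), whereas you use the global pointwise inequality $\Wh\geq\Whf$; both are equivalent here.
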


\begin{proof}
  Let $K_0 \Subset  \Rzn$, fix $\delta>0$ and find the
  corresponding $c_{\rm el}(\delta)>0$ from condition
  \eqref{approx}. As $\epsi K_0 \subset B_{c_{\rm
      el}(\delta)}(0)$ for $\epsi$ sufficiently small we
  have that 
$$ \limsup_{\epsi \to 0}\,\sup_{K_0}\big|\Wel^\epsi -
|\cdot|_\C^2\big|\leq \delta \sup_{K_0}|\cdot|^2 \leq \delta c$$
and local uniform convergence follows from $\delta>0$ being
arbitrary. The same argument applies to $\Whf^\epsi$.

As for the $\Gliminf$ inequality \eqref{inq}, let $z_\epsi \to z$ and assume with no loss of
generality that $\sup_\epsi\Wh^\epsi(z_\epsi) <\infty $. Hence,  $\Wh^\epsi(z_\epsi)=
\Whf^\epsi(z_\epsi)$ and the inequality follows from the above proved
uniform convergence.
\end{proof}

We are now in the position of proving the
$\Gliminf$ estimate for the energy. It follows indeed from \eqref{uni} and
the lower-semicontinuity result of Lemma \ref{balder}.

\begin{lemma}[$\Gliminf$ for the energy] \label{Genergy} 
For all $ (u,z) \in \QQ$ we have
$$\WW_0(u,z) \leq \inf\Big\{ \liminf_{\epsi
    \to 0} \WW_\epsi(u_\epsi,z_\epsi) \ \big| \ (u_\epsi,z_\epsi) \to
   (u,z)  \ \text{weakly in} \ \QQ\Big\}.$$
\end{lemma}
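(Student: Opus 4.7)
The plan is as follows. First assume without loss of generality that $M:=\liminf_\epsi \WW_\epsi(u_\epsi,z_\epsi)<\infty$ and pass to a subsequence realising the liminf and satisfying $\sup_\epsi \WW_\epsi(u_\epsi,z_\epsi)<\infty$. By the coercivity Lemma \ref{encoerc}, this gives uniform bounds on $\|\nabla u_\epsi\|_{\LL^2}$, $\|z_\epsi\|_{\LL^2}$ and, crucially, on $\|\epsi z_\epsi\|_{\LL^\infty}$. Setting $B_\epsi:=(\I{+}\epsi z_\epsi)^{-1}$, assumption \eqref{compdom} via \eqref{conscomp} yields $\|B_\epsi\|_{\LL^\infty}\leq c_K$. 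The identity $B_\epsi-\I=-\epsi B_\epsi z_\epsi$ combined with the $\LL^2$-bound on $z_\epsi$ gives $B_\epsi \to \I$ strongly in $\LL^2(\Omega;\Rzn)$, hence in $\LL^p$ for every $p<\infty$ by interpolation with the $\LL^\infty$-bound.

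Next I introduce the rescaled total strain $A_\epsi:=\nabla u_\epsi B_\epsi - B_\epsi z_\epsi$, for which a direct computation using $B_\epsi-\I=-\epsi B_\epsi z_\epsi$ gives $(\I{+}\epsi\nabla u_\epsi)(\I{+}\epsi z_\epsi)^{-1}=\I{+}\epsi A_\epsi$. The key claim is that $A_\epsi \rightharpoonup \nabla u-z$ weakly in $\LL^2(\Omega;\Rzn)$. This follows by a weak-times-strong argument: splitting $\nabla u_\epsi B_\epsi - \nabla u = (\nabla u_\epsi - \nabla u)B_\epsi + \nabla u(B_\epsi - \I)$, the first term tends weakly to zero in $\LL^2$ (test against $\phi\in \LL^\infty$ and transfer $B_\epsi$ to the test function, using strong $\LL^2$-convergence of $B_\epsi$ to $\I$), while the second tends strongly to zero by dominated convergence. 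The same reasoning applies to $B_\epsi z_\epsi \rightharpoonup z$.

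With $A_\epsi \rightharpoonup \nabla u-z$ weakly in $\LL^2$ at hand, the liminf estimate for the elastic term is obtained by combining the local uniform convergence $\Wel^\epsi \to |\cdot|_\C^2$ established in Lemma \ref{primo} with the Ioffe-type lower semicontinuity result for normal integrands converging to a convex continuous limit (the Lemma \ref{balder} referenced in the text); by the minor symmetry \eqref{minor} the limit integrand satisfies $|\nabla u-z|_\C^2 = |(\nabla u-z)^\sy|_\C^2$, yielding
\[\int_\Omega |\nabla u^\sy{-}z^\sy|_\C^2 \dx \ \leq\ \liminf_{\epsi\to 0}\int_\Omega \Wel^\epsi(A_\epsi)\dx.\]
The hardening term is entirely analogous using $z_\epsi \rightharpoonup z$ directly together with \eqref{inq} and the same lower semicontinuity principle, giving $\int_\Omega |z|_\H^2\dx \leq \liminf_\epsi \int_\Omega \Wh^\epsi(z_\epsi)\dx$. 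Adding the two inequalities closes the argument.

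The main obstacle is the weak convergence $A_\epsi \rightharpoonup \nabla u-z$: the nonlinearity $(\I{+}\epsi z_\epsi)^{-1}$ is only controllable thanks to the $\LL^\infty$-bound on $\epsi z_\epsi$ supplied by the compactness of the effective domain $K$ in assumption \eqref{compdom}, and it is exactly this bound that allows one to turn the multiplicative identity defining $A_\epsi$ into a product amenable to a weak-strong passage to the limit. Without such an $\LL^\infty$-bound one could not upgrade $B_\epsi\to\I$ to the strong convergence needed to multiply against a merely weakly converging factor.
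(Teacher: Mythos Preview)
Your proof is correct and follows the same overall strategy as the paper: reduce to $\sup_\epsi \WW_\epsi(u_\epsi,z_\epsi)<\infty$, exploit the $\LL^\infty$-bound on $\epsi z_\epsi$ coming from \eqref{compdom}--\eqref{conscomp} to control the inverse, prove $A_\epsi\rightharpoonup \nabla u{-}z$ weakly in $\LL^2$, and conclude via Lemma~\ref{primo} and Lemma~\ref{balder} separately for the elastic and hardening parts.

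The only noteworthy difference is in how you establish $A_\epsi\rightharpoonup \nabla u{-}z$. The paper introduces the second-order remainder $w_\epsi:=\epsi^{-1}\big((\I{+}\epsi z_\epsi)^{-1}-\I+\epsi z_\epsi\big)$, expands $A_\epsi=\nabla u_\epsi-z_\epsi+w_\epsi-\epsi\nabla u_\epsi(z_\epsi{-}w_\epsi)$, and shows each correction term goes to zero weakly in $\LL^2$ via an $\LL^1$--$\LL^\infty$ interpolation. You instead keep the exact factorisation $A_\epsi=\nabla u_\epsi B_\epsi-B_\epsi z_\epsi$ and pass to the limit by a weak--strong product argument using $B_\epsi\to\I$ strongly in every $\LL^p$, $p<\infty$. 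Your route is slightly more streamlined here; the paper's choice is motivated by the fact that $w_\epsi$ reappears explicitly in the mutual recovery sequence construction (Lemma~\ref{mutual}), where the expansion is needed anyway.
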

\begin{proof}
  Let $(u_{\epsi},z_{\epsi}) \to (u,z)$ weakly in $\QQ$. We can assume with no loss of
  generality that $\sup_\epsi \WW_{\epsi} (u_{\epsi},z_{\epsi}) < 
  \infty$. Owing to the $\Gliminf$ inequality
  \eqref{inq} and
the lower semicontinuity Lemma \ref{balder} we readily conclude that  
\begin{equation}\int_\Omega |z|^2_\H \leq \liminf_{\epsi\to 0} \int_\Omega
 \Wh^\epsi(z_\epsi)  \dx= \liminf_{\epsi\to 0} \frac{1}{\epsi^2}\int_\Omega
 \Wh(\I {+} \epsi z_\epsi)\dx.\label{ineqh}
\end{equation}

Moreover, $\WW_\epsi(u_\epsi,z_\epsi)<\infty$ implies $\epsi z_{\epsi}\in K{-}\I$ almost
  everywhere. In particular, $\epsi z_\epsi$ are
   bounded in $\LL^\infty$. The same holds for $(\I {+} {\epsi}
  z_{\epsi})^{-1}$ as
$$(\I {+} {\epsi}
  z_{\epsi})^{-1} =  \cof (\I {+} {\epsi} z_{\epsi})  /\det (\I {+} {\epsi}  z_{\epsi})  = \cof (\I {+} {\epsi}
  z_{\epsi}).$$
We define the auxiliary tensors 
\begin{equation}\label{we}
w_{\epsi} := \frac{1}{{\epsi}}\Big( (\I {+} {\epsi} z_{\epsi})^{-1} - \I +
{\epsi} z_{\epsi} \Big)= \epsi (\I {+} \epsi z_\epsi)^{-1} z_\epsi^2,
\end{equation}
so that $(\I {+} {\epsi} z_{\epsi})^{-1} = \I {-} {\epsi} z_{\epsi} {+} {\epsi}
w_{\epsi}$. By the first equality in \eqref{we} we have $\| {\epsi} w_{\epsi}
\|_{\LL^\infty} \leq c$, while the second gives
\begin{align}
 \| w_{\epsi}\|_{\LL^1}  
  = {\epsi} \| (\I {+} {\epsi}
   z_{\epsi})^{-1} z_{\epsi}^2\|_{\LL^1} \leq c{\epsi} \| z_\epsi
   \|^2_{\LL^2} \leq c\epsi\nonumber
\end{align}
where we have also used the boundedness in $\LL^2$ of $z_\epsi$
from \eqref{bound}.
Thus, by interpolation, $w_{\epsi}$ is  bounded
in $\LL^2$ as well, so that $w_\epsi \to 0$ weakly in $\LL^2$. 

Given $A_\epsi := (F_{\rm el,\epsi} {-}\I)/\epsi$ we want to show the
weak $\LL^2$
convergence $A_\epsi\to\nabla u{-} z$. From
\begin{equation}
A_\epsi=\frac{1}{\epsi}\left(  (\I
 {+} {\epsi} \nabla u_{\epsi})(\I {+} {\epsi} z_{\epsi})^{-1} -
 \I\right)\label{Aepsi}
\end{equation}
we find $\I{+}\epsi A_\epsi = (\I{+}\epsi \nabla
u_\epsi)(\I {+} \epsi z_\epsi)^{-1}$ and compute that 
\begin{align}
  A_\epsi=\frac{1}{\epsi}\big(  (\I
 {+} {\epsi} \nabla u_{\epsi})(\I {-} {\epsi} z_{\epsi}{+}\epsi w_\epsi) -
 \I\big) = \nabla u_\epsi - z_\epsi + w_\epsi - \epsi\big(\nabla
u_\epsi z_\epsi {-} \nabla u_\epsi w_\epsi\big).\nonumber
\end{align}
Hence, as we have that $\nabla u_\epsi {-} z_\epsi \to \nabla u {-} z$ and
$w_\epsi \to 0$ weakly in
$\LL^2$, we have to show $v_\epsi := \nabla u_\epsi (\epsi z_\epsi{-} \epsi
w_\epsi)\to 0$  weakly in $\LL^2$ as well. Indeed, the boundedness in $\LL^2$ 
of $v_\epsi$ follows from $\|
\nabla u_\epsi \|_{\LL^2}\leq c$ (see \eqref{bound}) and the
$\LL^\infty$-boundedness of $\epsi z_\epsi$ and $\epsi
w_\epsi$. Moreover, since
$z_\epsi$ and $w_\epsi$ are bounded in $\LL^2$ we have $\| v_\epsi
\|_{\LL^1} \leq c\epsi$ and conclude $v_\epsi \to 0$  weakly in $\LL^2$.

Eventually, owing to Lemma \ref{primo}, we are in the
position of exploiting the lower semicontinuity Lemma
\ref{balder} in order to obtain that 
\begin{align}
 \int_\Omega |\nabla u {-} z|^2_\C &\leq \liminf_{\epsi\to 0} \int_\Omega
 \Wel^\epsi(A_\epsi)  \dx\nonumber\\
&= \liminf_{\epsi\to 0} \frac{1}{\epsi^2}\int_\Omega
 \Wel\big((\I{+} \epsi \nabla u_\epsi)(\I {+} \epsi z_\epsi)^{-1}\big)\dx.\nonumber%\label{bub}
\end{align}
Finally, by recalling relation \eqref{minor} and the already
established \eqref{ineqh} the assertion follows.
\end{proof}

Before moving to the $\Gliminf$ inequality for the
dissipation functionals $\DD_\epsi$, we prepare here a preliminary
result on the functions $D_\epsi$.

\begin{lemma}[$\Gamma$-convergence of $D_\epsi$]\label{GD} 
  $D_\epsi \to D_0$ in the sense of $\Gamma$-convergence.
\end{lemma}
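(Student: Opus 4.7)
The plan is to establish the $\Gliminf$ and $\Glimsup$ inequalities for pointwise $\Gamma$-convergence on $\Rzn \times \Rzn$, relying on three ingredients: the positive $1$-homogeneity of $R$, its sublinearity, which yields the Jensen-type bound $\int_0^1 R(v)\,dt \geq R\bigl(\int_0^1 v\,dt\bigr)$, and the Lie-group expansion $\exp(\epsi w) = \I + \epsi w + O(\epsi^2)$ near the identity.

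For the $\Glimsup$, if $D_0(z_1,z_2) = R(z_2{-}z_1) = \infty$ the constant sequence is a recovery sequence. When $z_2{-}z_1 \in \Rzd$, I would set $z_{1,\epsi}:=z_1$ and build $z_{2,\epsi}$ via the multiplicative ansatz
\[
\I + \epsi z_{2,\epsi} := (\I+\epsi z_1)\exp\bigl(\epsi(z_2{-}z_1)\bigr),
\]
so that $z_{2,\epsi}=z_2 + O(\epsi) \to z_2$ by Taylor expansion. The path $P_\epsi(t):=(\I+\epsi z_1)\exp(t\epsi(z_2{-}z_1))$ then connects the two endpoints with constant right-translated velocity $\dot P_\epsi P_\epsi^{-1}=\epsi(z_2{-}z_1)\in\Rzd$, so the definition of $D$ and the $1$-homogeneity of $R$ yield
\[
D(\I+\epsi z_1,\I+\epsi z_{2,\epsi})\leq R\bigl(\epsi(z_2{-}z_1)\bigr)=\epsi R(z_2{-}z_1).
\]
Dividing by $\epsi$ gives $\limsup_\epsi D_\epsi(z_{1,\epsi},z_{2,\epsi}) \leq R(z_2{-}z_1) = D_0(z_1,z_2)$. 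Crucially, this construction only requires $\det M_\epsi =1$ for the ratio $M_\epsi := (\I+\epsi z_{2,\epsi})(\I+\epsi z_1)^{-1}$ and not that $\I+\epsi z_i\in\SL$ individually, so it succeeds regardless of $\tr z_1$.

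For the $\Gliminf$, given $(z_{1,\epsi},z_{2,\epsi})\to(z_1,z_2)$ I may assume $\sup_\epsi D_\epsi(z_{1,\epsi},z_{2,\epsi}) < \infty$. For $\delta>0$, select near-minimizing paths $P_\epsi\in\CC^1(0,1;\Rzn)$ connecting $\I+\epsi z_{1,\epsi}$ and $\I+\epsi z_{2,\epsi}$ with $\int_0^1 R(\dot P_\epsi P_\epsi^{-1})\,dt \leq \epsi D_\epsi + \delta\epsi$. The coercivity $R\geq c_4|\cdot|$ on $\Rzd$ from \eqref{R2} gives $\int_0^1 |\dot P_\epsi P_\epsi^{-1}|\,dt = O(\epsi)$, and a Gronwall-type bootstrap then confines $P_\epsi$ to an $O(\epsi)$-neighborhood of $\I$. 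Writing $P_\epsi=\I+\epsi Q_\epsi$ with $Q_\epsi$ uniformly bounded, $Q_\epsi(0)=z_{1,\epsi}$, $Q_\epsi(1)=z_{2,\epsi}$, and $\int_0^1|\dot Q_\epsi|\,dt \leq C$, the expansion $\dot P_\epsi P_\epsi^{-1}=\epsi\dot Q_\epsi + O(\epsi^2|\dot Q_\epsi|)$ integrates to
\[
\int_0^1 \dot P_\epsi P_\epsi^{-1}\,dt = \epsi(z_{2,\epsi}{-}z_{1,\epsi}) + O(\epsi^2) =: \epsi A_\epsi,
\]
with $A_\epsi\in\Rzd$ (the closed subspace is preserved under integration) and $A_\epsi\to z_2{-}z_1$. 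Applying the Jensen bound together with the $1$-homogeneity of $R$ yields $\int_0^1 R(\dot P_\epsi P_\epsi^{-1})\,dt \geq \epsi R(A_\epsi)$; since $R$ is continuous on $\Rzd$ and lower semicontinuous on $\Rzn$, $\liminf R(A_\epsi) \geq R(z_2{-}z_1)$. Dividing by $\epsi$ and letting $\delta\to 0$ gives $\liminf_\epsi D_\epsi(z_{1,\epsi},z_{2,\epsi}) \geq R(z_2{-}z_1) = D_0(z_1,z_2)$.

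The principal obstacle is the path-control step: the Gronwall bootstrap that confines $P_\epsi$ near $\I$ and supplies the uniform $L^1$ bound on $\dot Q_\epsi$ needed to make the commutator-type remainder $\epsi^2 \int_0^1 \dot Q_\epsi Q_\epsi\,dt$ genuinely $O(\epsi^2)$. These estimates rely essentially on the two-sided bound \eqref{R2}.
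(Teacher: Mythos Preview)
Your $\Gliminf$ argument follows the same strategy as the paper's: choose near-minimizing paths, use Gronwall to confine them to an $O(\epsi)$-neighborhood of $\I$, rescale, and then invoke Jensen. Your version is in fact slightly more elementary, since you apply Jensen at fixed $\epsi$ to obtain $\int_0^1 R(\dot P_\epsi P_\epsi^{-1})\,dt \geq \epsi R(A_\epsi)$ and then pass to the limit using the continuity of $R^\dev$ on $\Rzd$. The paper instead reparametrizes to get an $\LL^\infty$ bound on the rescaled velocities, extracts a weak-$*$ limit, and appeals to the Balder--Ioffe lower-semicontinuity Lemma~\ref{balder} before applying Jensen. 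Both arguments rest on the same Gronwall step and the two-sided estimate \eqref{R2}.

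Your recovery sequence, however, has a genuine computational error in the order of multiplication. With $P_\epsi(t)=(\I{+}\epsi z_1)\exp(t\epsi w)$, $w=z_2{-}z_1$, one computes $\dot P_\epsi = P_\epsi\,\epsi w$ and hence
\[
\dot P_\epsi P_\epsi^{-1}
= (\I{+}\epsi z_1)\,\epsi w\,(\I{+}\epsi z_1)^{-1},
\]
which is a conjugate of $\epsi w$, not $\epsi w$ itself. Conjugation preserves the trace but \emph{not} symmetry; for generic $z_1$ this falls outside $\Rzd$ and thus $R(\dot P_\epsi P_\epsi^{-1})=\infty$, so your path gives no upper bound. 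The remedy is to place the exponential on the left,
\[
\I{+}\epsi z_{2,\epsi}:=\exp\bigl(\epsi(z_2{-}z_1)\bigr)(\I{+}\epsi z_1),
\]
which is precisely the paper's choice: then $(\I{+}\epsi z_{2,\epsi})(\I{+}\epsi z_1)^{-1}=\exp(\epsi w)$, and the path $t\mapsto\exp(t\epsi w)$ from $\I$ satisfies $\dot P P^{-1}=\epsi w\in\Rzd$ exactly, yielding $D_\epsi(z_1,z_{2,\epsi})\leq R(w)$.
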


\begin{proof}
{\it $\Gliminf$ inequality.}
Let $(z_\epsi,\haz z_\epsi) \to (z,\haz z)$ and assume with no loss of
generality that $ \sup_{\epsi }D_\epsi(z_\epsi, \haz z_\epsi)
<\infty$. In particular, we have that $ (\I{+}\epsi \haz
z_\epsi)(\I{+}\epsi  z_\epsi)^{-1} \in \SL$. By defining 
$$\zeta_\epsi := \frac{1}{\epsi}\big( (\I{+}\epsi \haz
z_\epsi)(\I{+}\epsi  z_\epsi)^{-1} -\I\big) = \haz z_\epsi - z_\epsi
+w_\epsi 
- \epsi \haz z_\epsi z_\epsi +  \epsi  \haz z_\epsi w_\epsi$$
where $w_\epsi$ is given in \eqref{we}, we readily check that
$\I{+}\epsi  \zeta_\epsi  \in \SL$ and $\zeta_\epsi \to \haz z - z$.

Let now $t \mapsto P_\epsi(t) \in \CC^1(0,1;\Rzn)$ be such that
$P_\epsi(0)=\I$, $P_\epsi(1)=\I {+} \epsi \zeta_\epsi$, and
$$D(I, \I {+} \epsi \zeta_\epsi) \geq (1 {-} \epsi) \int_0^1 R(\dot P_\epsi P_\epsi^{-1})
\dt.$$
Such function $P_\epsi$ exists by the very definition of $D$.
By possibly reparametrizing $P_\epsi$ and using assumption \eqref{R2}
 and bound  \eqref{Dbound} we can assume that 
\begin{equation}\label{1} 
c_4|\dot P_\epsi(t) P_\epsi^{-1}(t)| \stackrel{\eqref{R2}}{\leq} R(\dot P_\epsi(t)
P_\epsi^{-1}(t)) \leq 2 D(I, \I {+} \epsi \zeta_\epsi)  \stackrel{\eqref{Dbound}}{\leq}  c\epsi.
\end{equation}
Hence, 
$$ |P_\epsi(t) {-} \I| \leq \int_0^t
|\dot P_\epsi P_\epsi^{-1}|\, |P_\epsi|\ds \leq  c\epsi\int_0^t
|P_\epsi|\ds \leq c\epsi \left( 1 + \int_0^t
|P_\epsi {-}\I|\ds\right) $$
 so that $P_\epsi \to \I$ uniformly by Gronwall Lemma. 

By defining $\haz P_\epsi(t) = \I + (P_\epsi(t) {-} \I)/\epsi$ one has that $\haz P_\epsi(0)=\I$ and $\haz P_\epsi(1)=\I
{+} \zeta_\epsi$. Moreover, as $\epsi \dot{\haz P_\epsi} 
= \dot P_\epsi$ and $R$ is positively $1$-homogeneous \eqref{R1}, we have that
$$\frac{1}{\epsi}D(\I, \I {+} \epsi \zeta_\epsi) \geq (1 {-} \epsi) \int_0^1
R(\dot{\haz P_\epsi} P_\epsi^{-1}) \dt.$$
Owing now to bound \eqref{1}, by possibly extracting not relabeled
subsequences, we have that $\dot{\haz P_\epsi} \to Q $ weakly-star in $\LL^\infty(0,1;\Rzn)$ and 
\begin{align}
&\liminf_{\epsi \to 0}D_\epsi(z_\epsi,\haz z_\epsi) = \liminf_{\epsi
  \to 0} \frac{1}{\epsi} D(\I, \I {+}\epsi\zeta_\epsi) \nonumber\\
&\geq \liminf_{\epsi \to 0} \int_0^1
R(\dot{\haz P_\epsi} P_\epsi^{-1}) \dt\geq \int_0^1 R(Q) \dt\geq
R(\tilde Q)\nonumber
\end{align}
where we have exploited the lower semicontinuity tool of Lemma \ref{balder}
and used Jensen's inequality with $\tilde Q = \int_0^1 Q\dt$.

Finally, by integrating we have that
$$ \tilde Q = \int_0^1 Q \dt = \lim_{\epsi \to 0}\int_0^1 \dot{\haz P_\epsi} \dt =\lim_{\epsi \to
  0} \zeta_\epsi= \haz z - z$$
so that we have checked 
$$\liminf_{\epsi \to 0} D_\epsi(z_\epsi,\haz z_\epsi) \geq R(\haz z {-}
z).$$

{\it Recovery sequence.} Given $\zeta \in \Rzd$ we have that
$\exp(\zeta) \in \SL$ and, by taking $P(t):= \exp(t \zeta)$ into the definition
  of $D$, we
  readily check that $D(\I,\exp(\zeta)) \leq R(\zeta)$. 

Let now $z,  \, \haz z \in \Rzd$ be given and define
$$\haz z_\epsi = \frac{1}{\epsi} \big( \exp(\epsi (\haz z {-} z))(\I
{+} \epsi z) - \I\big).$$
As $(\I {+}\epsi \haz z_\epsi)(\I{+}\epsi z)^{-1} = \exp(\epsi (\haz z {-} z))$, we have that 
\begin{align}
 \limsup_{\epsi \to 0} D_\epsi(z,\haz z_\epsi) = \limsup_{\epsi \to
   0}\frac{1}{\epsi} D(\I, \exp(\epsi(\haz z {-} z))) \leq R(\haz z
 {-} z) = D_0(z,\haz z)\nonumber
\end{align}
so that $(z,\haz z_\epsi)$ is a recovery sequence.
\end{proof}

Owing to Lemma \ref{GD}, it suffices now to
  apply the lower semicontinuity result in Lemma \ref{balder} in order
  to establish the
$\Gliminf$ inequality for the
dissipation functionals. More precisely, we have following.

\begin{lemma}[$\Gliminf$ for the dissipation]\label{Gdissipation}
  \begin{align}
    \label{GliminfDD}
    \DD_0(z,\haz z) \leq \inf &\Big\{ \liminf_{\epsi \to 0} \DD_\epsi
      (z_\epsi,\haz z_\epsi) \ \big| \nonumber\\
&\  (z_\epsi,\haz
      z_\epsi)\to(z,\haz z) \ \text{weakly in} \ (\LL^2(\Omega;\Rzn))^2  \Big\}.
  \end{align}
\end{lemma}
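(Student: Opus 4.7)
The plan is to obtain \eqref{GliminfDD} as a direct integral-level consequence of the pointwise $\Gamma$-convergence result $D_\epsi \to D_0$ established in Lemma \ref{GD}, combined with the abstract weak lower semicontinuity tool Lemma \ref{balder}. Fix a sequence $(z_\epsi,\haz z_\epsi) \to (z,\haz z)$ weakly in $(\LL^2(\Omega;\Rzn))^2$ and assume without loss of generality that $\liminf_{\epsi \to 0} \DD_\epsi(z_\epsi,\haz z_\epsi) < \infty$, extracting a subsequence achieving the liminf.

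The pair of integrands $(z_1,z_2) \mapsto D_\epsi(z_1,z_2)$ $\Gamma$-converges to $D_0(z_1,z_2) = R(z_2{-}z_1)$ by Lemma \ref{GD}. The crucial structural feature of the limit is that $D_0$ is \emph{jointly convex} in $(z_1,z_2) \in \Rzn \times \Rzn$: indeed, $R$ is convex by assumption \eqref{R1} and the map $(z_1,z_2) \mapsto z_2{-}z_1$ is linear, so the composition is convex. Moreover $D_0$ is nonnegative and lower semicontinuous on $\Rzn\times \Rzn$ (being $+\infty$ off the deviatoric part and continuous there by \eqref{R2}). These are precisely the structural conditions under which an Ioffe/Balder-type lemma (as encoded in Lemma \ref{balder}) yields weak-$\LL^2$ lower semicontinuity of the integral functional from pointwise $\Gliminf$ of integrands.

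Applying Lemma \ref{balder} to the integrands $D_\epsi$ and the weakly convergent sequence $(z_\epsi,\haz z_\epsi)$ therefore gives
\begin{equation*}
\int_\Omega D_0(z,\haz z)\dx \leq \liminf_{\epsi \to 0} \int_\Omega D_\epsi(z_\epsi,\haz z_\epsi) \dx,
\end{equation*}
and the left-hand side equals $\DD_0(z,\haz z) = \int_\Omega R(\haz z {-} z) \dx$ by definition, while the right-hand side equals $\liminf_{\epsi\to 0}\DD_\epsi(z_\epsi,\haz z_\epsi)$. Taking the infimum over admissible sequences on the right yields \eqref{GliminfDD}.

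The main potential obstacle is verifying that the hypotheses of Lemma \ref{balder} are exactly those we need to invoke: one has to check that the pointwise $\Gliminf$ inequality proved in Lemma \ref{GD} (which is phrased for strongly convergent sequences $(z_\epsi,\haz z_\epsi) \to (z,\haz z)$ in $\Rzn \times \Rzn$) is the correct input for the abstract weak lower semicontinuity result. This is standard provided the limit integrand $D_0$ is convex in the variable converging only weakly, which, as noted, holds here by joint convexity of $R(\haz z{-}z)$ in $(z,\haz z)$. No further analysis of the nonconvex multiplicative structure of $D_\epsi$ is needed, since only the convexity of the limit is relevant for the $\Gliminf$ inequality.
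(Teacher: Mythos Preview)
Your proposal is correct and follows exactly the route the paper takes: the paper's proof is the single sentence preceding the lemma, which says that the result follows from the pointwise $\Gamma$-convergence of Lemma~\ref{GD} combined with the lower-semicontinuity tool Lemma~\ref{balder}. Your additional verification that $D_0(z_1,z_2)=R(z_2{-}z_1)$ is jointly convex (as the composition of the convex $R$ with a linear map) is precisely the hypothesis needed to invoke the convex case of Lemma~\ref{balder}, and the passage from weak $\LL^2$ to weak $\LL^1$ convergence is immediate since $\Omega$ is bounded.
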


%%%%%%%%%%%%%%%%%%%%%%%%%%%%%%%%
\subsection{Mutual recovery sequence}\label{sec:mutual}

We now come to the construction
of a {\it mutual recovery sequence}. Let us recall from
\cite{mrs} that indeed {\it two separate} $\Glimsup$ inequalities for 
energy and dissipation generally do not suffice for passing to the 
limit in RIS. In particular, the construction of recovery sequences for
energy and dissipation has to be {\it mutually} coordinated.

\begin{lemma}[Mutual recovery sequence]\label{mutual} Let $t \in [0,T]$, $(u_\epsi, z_\epsi) \to
  (u_0,z_0)$ weakly in $\QQ$, and 
\begin{equation}\sup_{\epsi} \EE_\epsi(t,u_\epsi,z_\epsi) < \infty.\label{enbound}
\end{equation}
Moreover, let $(\haz u_0, \haz z_0):=  (u_0,z_0)
  + (\tilde u, \tilde z)$ with  $(\tilde u, \tilde z) \in
  \CC^\infty_{\rm c}(\Omega;\Rz^d)\times   \CC^\infty_{\rm
    c}(\Omega;\Rzd) $.  Then,
  there exist $(\haz u_\epsi, \haz z_\epsi)\in \QQ$ such that 
  \begin{align}
& (\haz u_\epsi,\haz z_\epsi) \to (\haz u_0,\haz z_0) \ \ \text{weakly
  in} 
\ \QQ   \ \text{and}\nonumber\\
     & \limsup_{\epsi \to 0} \Big( \EE_\epsi(t,\haz u_\epsi, \haz z_\epsi)
    - \EE_\epsi(t, u_\epsi,
    z_\epsi) + \DD_\epsi (z_\epsi,\haz z_\epsi)\Big) \nonumber\\
&\leq  \Big(\EE_0(t,\haz u_0, \haz z_0)
  - \EE_0( t,u_0,  z_0)   + \DD_0 (z_0,\haz z_0)\Big). \label{mrs}
  \end{align}
\end{lemma}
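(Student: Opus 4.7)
The plan is to construct $(\haz u_\epsi, \haz z_\epsi)$ by adding $\tilde u$ to $u_\epsi$ and perturbing $z_\epsi$ multiplicatively on $\SL$, with a cutoff confining the modification to the region where $\I {+} \epsi z_\epsi$ stays close to $\I$. Concretely, I would fix $\rho > 0$ with $\{P \in \SL : |P {-} \I| < 2\rho\} \subset K$, introduce $E_\epsi := \{|\epsi z_\epsi| > \rho\}$ which satisfies $|E_\epsi| = O(\epsi^2)$ by Chebyshev and \eqref{bound}, and set $\haz u_\epsi := u_\epsi {+} \tilde u$ together with
\begin{equation*}
\I {+} \epsi \haz z_\epsi :=
\begin{cases}
\exp(\epsi \tilde z)(\I {+} \epsi z_\epsi) & \text{on } E_\epsi^c, \\
\I {+} \epsi z_\epsi & \text{on } E_\epsi.
\end{cases}
\end{equation*}
Since $\tilde z$ is deviatoric, $\exp(\epsi \tilde z) \in \SL$, and for small $\epsi$ the multiplicative formula places $\I {+} \epsi \haz z_\epsi$ inside $B_{2\rho}(\I) \cap \SL \subset K$ on $E_\epsi^c$, so $\Wh^\epsi(\haz z_\epsi)$ is finite almost everywhere. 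The $L^\infty$-expansion $\exp(\epsi \tilde z) = \I {+} \epsi \tilde z {+} O(\epsi^2)$ combined with the $L^p$-convergence $\epsi z_\epsi \to 0$ ($p<\infty$) and $|E_\epsi| \to 0$ yields $\haz z_\epsi {-} z_\epsi \to \tilde z$ strongly in $\LL^2$, hence $(\haz u_\epsi, \haz z_\epsi) \to (\haz u_0, \haz z_0)$ weakly in $\QQ$.

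The dissipation bound is immediate by design: on $E_\epsi^c$ the increment $(\I{+}\epsi \haz z_\epsi)(\I{+}\epsi z_\epsi)^{-1} = \exp(\epsi \tilde z)$ is reached by the smooth $\SL$-path $t \mapsto \exp(t \epsi \tilde z)$, so plugging it into the definition of $D$ and using the positive $1$-homogeneity of $R$ yields $D_\epsi(z_\epsi, \haz z_\epsi) \leq R(\tilde z)$ pointwise, while on $E_\epsi$ it vanishes. Integrating gives $\limsup_\epsi \DD_\epsi(z_\epsi, \haz z_\epsi) \leq \DD_0(z_0, \haz z_0)$.

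For the energy gap, I would use the quadratic-form identity $|X|_{\mathbb T}^2 - |Y|_{\mathbb T}^2 = \frac{1}{2}(X{+}Y) : {\mathbb T}(X{-}Y)$ for ${\mathbb T} \in \{\C, \H\}$ together with weak-strong pairing. Setting $A_\epsi := (F_{\rm el,\epsi} {-} \I)/\epsi$ and $\haz A_\epsi$ analogously, a computation in the spirit of Lemma \ref{Genergy} adapted to the perturbed sequence gives $\haz A_\epsi {-} A_\epsi \to \nabla \tilde u {-} \tilde z$ strongly in $\LL^2$ and $\haz A_\epsi {+} A_\epsi \to 2(\nabla u_0 {-} z_0) {+} (\nabla \tilde u {-} \tilde z)$ weakly in $\LL^2$, with analogous convergences for $\haz z_\epsi \pm z_\epsi$. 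Combined with the minor symmetries \eqref{minor}, these yield convergence of the quadratic-surrogate gaps $\int_\Omega(|\haz A_\epsi|_\C^2 {-} |A_\epsi|_\C^2)\dx$ and $\int_\Omega(|\haz z_\epsi|_\H^2 {-} |z_\epsi|_\H^2)\dx$ to the desired right-hand sides $\int_\Omega(|\nabla \haz u_0^\sy {-} \haz z_0^\sy|_\C^2 {-} |\nabla u_0^\sy {-} z_0^\sy|_\C^2)\dx$ and $\int_\Omega(|\haz z_0|_\H^2 {-} |z_0|_\H^2)\dx$, respectively.

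The hard part will be replacing these quadratic surrogates by the actual densities $\Wel^\epsi, \Whf^\epsi$. For the hardening term this is routine: shrinking $\rho$ below $c_{\rm h}(\delta)$ from \eqref{approx2} yields $|\Whf^\epsi(A) - |A|_\H^2| \leq \delta|A|_\H^2$ uniformly on $E_\epsi^c$, and the $\LL^1$-bound on $|z_\epsi|_\H^2$ and $|\haz z_\epsi|_\H^2$ reduces the replacement error to $O(\delta)$, eventually sent to zero. The elastic piece is more delicate, and I expect it to be the main obstacle: \eqref{approx} requires $|\epsi A_\epsi| \leq c_{\rm el}(\delta)$ pointwise, whereas only an $\LL^2$-bound on $A_\epsi$ is available. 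I would address this by splitting $\Omega$ into the ``good'' set $\{|\epsi A_\epsi| \leq c_{\rm el}(\delta)\}$, whose complement has measure $O(\epsi^2)$ by Chebyshev, using the quadratic expansion there, and controlling the complement via the $\LL^1$-bound on $\Wel^\epsi(A_\epsi)$ together with the Mandel estimate \eqref{Mandel}, which supplies a one-sided growth control for $\Wel$ along the multiplicative path connecting $F_{\rm el,\epsi}$ to $\haz F_{\rm el,\epsi}$. The loading term $\lan \ell(t), \tilde u \ran$ appears identically in $\EE_\epsi$ and $\EE_0$ and cancels in the gap; collecting the estimates yields \eqref{mrs}.
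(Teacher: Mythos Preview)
Your construction of $\haz z_\epsi$ via the multiplicative exponential, the dissipation bound, the good/bad set splitting, and the quadratic-trick argument on the good set are all essentially the paper's approach and are fine. The genuine gap is the choice $\haz u_\epsi = u_\epsi + \tilde u$.

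The bad-set estimate for the elastic energy relies on writing $\haz F_{\rm el,\epsi} = G_{1,\epsi}\,F_{\rm el,\epsi}\,G_{2,\epsi}$ with both $\|G_{1,\epsi}{-}\I\|_{\LL^\infty}$ and $\|G_{2,\epsi}{-}\I\|_{\LL^\infty}$ of order~$\epsi$, so that Lemma~\ref{lemmakey} (which is the precise consequence of~\eqref{Mandel} one needs) yields
\[
\Wel(\haz F_{\rm el,\epsi})-\Wel(F_{\rm el,\epsi})\;\leq\;c\big(\Wel(F_{\rm el,\epsi})+c\big)\,\epsi
\]
pointwise; integrating over the bad set and using the $\LL^1$-bound on $\Wel^\epsi(A_\epsi)$ plus $|\Omega\setminus U^\delta_\epsi|=O(\epsi^2)$ then gives the required $o(1)$. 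With your additive choice, however,
\[
G_{1,\epsi}=(\I{+}\epsi\nabla\haz u_\epsi)(\I{+}\epsi\nabla u_\epsi)^{-1}
=\I+\epsi\,\nabla\tilde u\,(\I{+}\epsi\nabla u_\epsi)^{-1},
\]
and $(\I{+}\epsi\nabla u_\epsi)^{-1}$ carries no $\LL^\infty$ bound whatsoever: only $\nabla u_\epsi\in\LL^2$ is available, and precisely on the bad set $\epsi\nabla u_\epsi$ is large. Hence $G_{1,\epsi}$ need not lie in $B_\gamma(\I)$ and Lemma~\ref{lemmakey} does not apply. Rewriting $\haz F_{\rm el,\epsi}=F_{\rm el,\epsi}\exp(-\epsi\tilde z)+H$ with $\|H\|_{\LL^\infty}\leq c\epsi$ does not help either, since an \emph{additive} perturbation of $F$ would require control of $|\partial_F\Wel(F)|$ itself, whereas~\eqref{Mandel} only controls $F^\t\partial_F\Wel(F)$ and no bound on $|F_{\rm el,\epsi}^{-1}|$ is assumed.

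The fix adopted in the paper is to build $\haz u_\epsi$ by \emph{composition}: with $\psi_\epsi:=\id+\epsi\tilde u$ and $\varphi_\epsi:=\id+\epsi u_\epsi$ one sets $\haz u_\epsi:=\epsi^{-1}(\psi_\epsi\circ\varphi_\epsi-\id)$. The chain rule then gives the genuinely multiplicative identity
\[
\I+\epsi\nabla\haz u_\epsi=\big(\I+\epsi\nabla\tilde u(\varphi_\epsi)\big)(\I+\epsi\nabla u_\epsi),
\]
so that $G_{1,\epsi}=\I+\epsi\nabla\tilde u(\varphi_\epsi)$ and $\|G_{1,\epsi}{-}\I\|_{\LL^\infty}\leq\epsi\|\nabla\tilde u\|_{\LL^\infty}$, exactly what the bad-set estimate needs. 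The rest of your outline (the strong $\LL^2$ convergence of $\haz A_\epsi{-}A_\epsi$, the hardening estimate via the local Lipschitz continuity~\eqref{lip} on the bad set, and the cancellation of the loading term) goes through with this modification.
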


  \begin{proof} For the sake of clarity, we decompose this
    argument into subsequent steps. The general strategy of the proof
    is to choose $(\haz u_\epsi, \haz z_\epsi)$ and show convergence
    to $(\haz u_0,\haz z_0)$, 
$$\limsup_{\epsi \to 0}\DD_\epsi(z_\epsi, \haz z_\epsi) \leq
\DD_0(z_0,\haz z_0) = R(\tilde z),$$
and
$$\limsup_{\epsi \to 0} \Big( \EE_\epsi(t,\haz u_\epsi,\haz z_\epsi) -
\EE_\epsi(t,u_\epsi,z_\epsi)\Big) \leq \EE_0(t,\haz u_0,\haz z_0) -
\EE_0(t,u_0,z_0).$$
Note that in order to establish the latter we cannot argue on individual terms
but rather aim at exploiting certain cancellations. This
resembles the situation of the so-called {\it quadratic trick} (see,
e.g., \cite{Mielke-Timofte05}) and crucially uses \eqref{Mandel} as
well as the smoothness of $(\tilde u, \tilde z)$. 
In particular, note that within this proof the
    constant $c$ may depend on $\tilde u$ and $\tilde z$ as well.

{\it Step 1: Choice of the mutual recovery sequence.}
By defining the functions $\psi_\epsi := \id + \epsi \tilde u$ and $\varphi_\epsi := \id +
\epsi u_\epsi$ and the set 
$$ \Omega_\epsi := \big\{x \in \Omega \ \big|\ \exp(\epsi \tilde z(x)) (\I {+}
\epsi z_\epsi(x))  \in K \big\},$$
the proof of the lemma follows by checking that the choices
\begin{align}
 \haz u_\epsi&:= \frac{1}{\epsi} \big(
  \psi_\epsi \circ \varphi_\epsi {-} \id \big),\nonumber\\
\haz z_\epsi &:= 
\left\{
  \begin{array}{ll}
\disp \frac{1}{\epsi}\big(
  \exp(\epsi \tilde z) (\I {+} \epsi z_\epsi)- \I \big)\quad
  &\text{on} \ \  \Omega_\epsi \\
z_\epsi &\text{else},
\end{array}
\right.\nonumber
\end{align}
fulfill \eqref{mrs}  and, in particular, $(\haz u_\epsi, \haz z_\epsi) \to
(\haz u_0, \haz z_0)$ weakly in $\QQ$.   The construction of $\haz u_\epsi$ via a
composition and of $\haz z_\epsi$ via matrix exponential and
multiplication is necessary in order to deal with the multiplicative nature 
of finite-strain elastoplasticity.

Note that the construction of the mutual recovery sequence is
compatible with the constraint ${\rm det} (\I{+}\epsi\nabla \haz u_\epsi
)>0$  considered in \eqref{smoothness}.  Indeed, by letting $\epsi$ be small enough we have
that $\I {+}\epsi \nabla \tilde u$ is everywhere positive definite,
hence $\det (\I {+}\epsi \nabla \tilde u)>0$. In particular, as $\det
(\I{+}\epsi \nabla u_\epsi)>0$ almost everywhere  by
\eqref{smoothness} and \eqref{enbound},  we have that 
$$\det (\I {+} \epsi \nabla \haz u_\epsi) =
\det(\nabla \psi_\epsi (\varphi_\epsi) \nabla \varphi_\epsi) = 
\det(\I{+}\epsi\nabla\tilde u(\varphi_\epsi))\det(\I {+}\epsi \nabla u_\epsi)>0
$$
almost everywhere as well. That is, $\I{+}\epsi \nabla \haz u_\epsi \in \GLp$ almost
everywhere.

From the bound \eqref{enbound} we readily have
that $\I{+}\epsi z_\epsi \in \SL$ almost everywhere. Hence, upon noting that 
$$\I{+}\epsi \haz z_\epsi = 
\left\{
  \begin{array}{ll}
  \exp(\epsi \tilde z) (\I {+} \epsi z_\epsi)\quad
  &\text{on} \ \  \Omega_\epsi \\
\I{+}\epsi z_\epsi &\text{else},
\end{array}
\right.
$$
we immediately check that $(\I {+} \epsi \haz z_\epsi) \in K \subset
\SL$ 
almost everywhere and is  bounded in $\LL^\infty$.
Using the fact that ${\rm tr}\, \tilde z = 0$ we have
${\rm det} \exp(\epsi \tilde z)=\exp ( \epsi  {\rm tr} \, \tilde z)=1$ and hence $ \exp(\epsi \tilde
z)(\I{+}\epsi z_\epsi) \in \SL$ almost everywhere. 

Next, note that the measure of the complement of $ \Omega_\epsi $ can be controlled 
by means of a Chebyshev estimate. Indeed, relation
\eqref{conscomp2} gives
\begin{align}
|\Omega \setminus  \Omega_\epsi | &= \int_{\Omega \setminus  \Omega_\epsi } 1 \dx \leq c_K^2 \int_\Omega \big|\exp(\epsi \tilde z)
(\I {+} \epsi z_\epsi) -\I\big|^2 \dx\nonumber\\
&  
= c_K^2\int_\Omega \big|\exp(\epsi \tilde z) - \I +
\epsi \exp (\epsi \tilde z) z_\epsi\big|^2 \dx \leq c \epsi^2 \left(1 {+}
  \int_\Omega z_\epsi^2 \dx\right) \leq c\epsi^2. \nonumber
\end{align}
Now, one has that 
\begin{align*}
  \haz z_\epsi - z_\epsi &= \frac{1}{\epsi} \big( \exp(\epsi \tilde
  z)(\I{+}\epsi z_\epsi) - \I\big) - z_\epsi=
  \frac{1}{\epsi}(\exp(\epsi \tilde z) {-} \I )(\I{+}\epsi z_\epsi)
  \quad \text{on} \ \  \Omega_\epsi ,\\
  \haz z_\epsi - z_\epsi &= 0 \quad \text{on} \ \ \Omega \setminus  \Omega_\epsi ,
\end{align*}
the convergence $|\Omega \setminus  \Omega_\epsi |\to 0$, and that $\haz z_\epsi$ and $z_\epsi$ are
 bounded in  $\LL^2$. Hence, we readily check that 
\begin{align}
 &\haz z_\epsi - z_\epsi \to \tilde z \ \ \text{strongly in} \ \LL^2(\Omega;\Rzn).  \label{convz}
\end{align}
 This implies that $\haz z_\epsi \to \haz z_0 = z_0{+} \tilde z$
weakly in $\LL^2$, hence 
\begin{align}
&\haz z_\epsi + z_\epsi \to \haz z_0 + z_0 \ \ \text{weakly in} \ \LL^2(\Omega;\Rzn),  \label{convz0}.
\end{align}

From the energy bound \eqref{enbound} and the coercivity Lemma \ref{encoerc} 
we have that $u_\epsi$ is  bounded in $\HH^1$  and $\epsi
u_\epsi \to 0$  strongly in $\LL^2$. Hence, one has that
$\|\varphi_\epsi{-} \id\|_{\LL^2} = \epsi \|u_\epsi\|_{\LL^2 } \leq
c\epsi$
and, by the Lipschitz continuity of $\nabla \tilde u$, we conclude that
\begin{equation}
\|\nabla \tilde u(\varphi_\epsi) -\nabla \tilde u \|_{\LL^2} \leq
c\|\varphi_\epsi{-} \id\|_{\LL^2} = c\epsi \|u_\epsi\|_{\LL^2 }\leq c
\epsi.\label{use1}
\end{equation}
Moreover, by computing
\begin{align}
 \nabla \haz u_\epsi &= \frac{1}{\epsi} \big( \nabla
 \psi_\epsi(\varphi_\epsi) \nabla \varphi_\epsi {-} \I\big) 
 = \frac{1}{\epsi}
 \big( (\I{+}\epsi \nabla \tilde u)(\varphi_\epsi) \nabla \varphi_\epsi {-}
 \I\big)  \nonumber\\
&= \frac{1}{\epsi}
 \big( \nabla \varphi_\epsi {+} \epsi \nabla \tilde u(\varphi_\epsi) \nabla \varphi_\epsi {-}
 \I\big) = \nabla u_\epsi +  \nabla \tilde u(\varphi_\epsi)  + \epsi  \nabla
\tilde u(\varphi_\epsi)  \nabla u_\epsi\nonumber
\end{align}
we obtain that 
\begin{align}
\|(\nabla \haz u_\epsi {-} \nabla u_\epsi) - \nabla \tilde u\|_{\LL^2}
&\leq \| \nabla \tilde u (\varphi_\epsi) {-} \nabla \tilde u\|_{\LL^2} +
 \| \epsi \nabla \tilde u (\varphi_\epsi) \nabla u_\epsi\|_{\LL^2}
\nonumber\\
& \stackrel{\eqref{use1}}{\leq} c \epsi+ c \epsi \| \nabla
u_\epsi\|_{\LL^2}\leq c\epsi\label{nablau}
\end{align}
 and this implies that $\haz u_\epsi \to \haz u_0 = u_0{+}\tilde
u$ weakly in $\HH^1$. 

The tensors $A_\epsi = (F_{\rm el,\epsi} {-}\I)/\epsi$ and $\haz A_\epsi =
(\haz F_{\rm el,\epsi} {-}\I)/\epsi$ fulfill
\begin{align}
 A_\epsi = \frac{1}{\epsi} \big((\I{+}\epsi \nabla
 u_\epsi)(\I{+}\epsi z_\epsi)^{-1} -\I \big), \quad
  \haz A_\epsi = \frac{1}{\epsi} \big((\I{+}\epsi \nabla
 \haz u_\epsi)(\I{+}\epsi \haz z_\epsi)^{-1} -\I \big)\nonumber
\end{align}
and are hence both bounded in $\LL^2$  by \eqref{conscomp}. 

Fix now $\delta$ and let $c_{\rm el}(\delta)$ and $c_{\rm h}(\delta)$ be given
by conditions \eqref{approx} and \eqref{approx2}, respectively. For
all $\epsi>0$ we define the sets
\begin{align}
U_\epsi^\delta&:= \Big\{x \in \Omega \ \big| \ |\epsi A_\epsi(x)| +  |\epsi
\haz A_\epsi(x)| \leq c_{\rm el}(\delta) \Big\}, \nonumber\\
Z_\epsi^\delta&:= \Big\{x \in \Omega \ \big| \ |\epsi z_\epsi(x)| +  |\epsi \haz z_\epsi(x)|
\leq c_{\rm h}(\delta) \Big\},\nonumber
\end{align}
We refer to the latter as {\it good sets} as strains are there
under control and we can replace
the nonlinear densities $\Wel$ and $\Wh$ by their quadratic
expansions via \eqref{approx} and \eqref{approx2}. In particular, on the good sets
the quadratic character of the expansions will entail the control of
the difference of the energy contributions by means of a suitable
cancellation ({\it quadratic trick}). On the other hand, we
term {\it bad sets} the corresponding complements $\Omega\setminus U^\delta_\epsi$ and
$\Omega \setminus Z^\delta_\epsi$ where the quadratic expansions are a priori
not available. Using some nontrivial cancellations, we will show that the difference of the energy
contributions on the bad sets is infinitesimal. Note
preliminarily that the integrands on the bad sets blow up while the bad sets have small measure. Indeed,
\begin{align}
  |\Omega \setminus U_\epsi^\delta| &= \int_{\Omega \setminus
    U_\epsi^\delta }1 \dx \leq \frac{ \epsi^2 }{c_{\rm el}^2(\delta) }
\int_\Omega (|A_\epsi|+|\haz A_\epsi|)^2 \dx  \leq
\frac{c\epsi^2}{c_{\rm el}^2(\delta)}, \label{cU}\\
  |\Omega \setminus Z_\epsi^\delta| &= \int_{\Omega \setminus
    Z_\epsi^\delta }1 \dx \leq \frac{ \epsi^2 }{ c_{\rm h}^2(\delta) }
\int_\Omega (|z_\epsi|+|\haz z_\epsi|)^2 \dx  \leq
\frac{c\epsi^2}{c_{\rm h}^2(\delta)}. \label{cZ}
\end{align}

{\it Step 2: Treatment of the dissipation term.} 
As $\haz z_\epsi = z_\epsi$ on $\Omega \setminus   \Omega_\epsi $ one has that 
\begin{align}
 \DD_\epsi (z_\epsi,\haz z_\epsi) 
& =  \frac1\epsi  \int_{  \Omega_\epsi }   D    (\I,\exp(\epsi \tilde z))\dx
\leq   \frac1\epsi   \int_{\Omega}  D   (\I,\exp(\epsi \tilde
z))\dx.\label{perdopo}
\end{align}

 In the construction of the recovery sequence in the proof of
Lemma \ref{GD} we have proved  that 
\begin{equation}\label{point}  \limsup_{\epsi \to 0} \frac1\epsi D   (\I, \exp(\epsi
  \tilde z)) \leq  R(\tilde z) .
\end{equation}
Eventually, by taking the $\limsup$ in relation \eqref{perdopo} and using 
\eqref{point} we have
proved that  
\begin{align}
 \limsup_{\epsi \to 0} \DD_\epsi ( z_\epsi, \haz z_\epsi)  &= 
  \limsup_{\epsi \to 0} \frac1\epsi   \int_\Omega  D (\I,\exp(\epsi \tilde
z))\dx \nonumber\\
&\leq   \int_\Omega R (\tilde z) \dx = \DD_0(z_0,\haz z_0).\label{D0}
\end{align}

%%%%%%%%%%%%%%%%%%%%%%%%%%%

{\it Step 3: Limsup for the differences of the elastic energy terms.}
Let us start 
 by rewriting the tensors $A_\epsi$ as
\begin{align}
A_\epsi=\nabla u_\epsi -
z_\epsi +w_\epsi - \epsi \nabla u_\epsi z_\epsi + \epsi \nabla u_\epsi w_\epsi\nonumber
\end{align}
 where $w_\epsi$ is given by \eqref{we}.  On the other hand, as regards the tensors $\haz A_\epsi$ we have that 
\begin{align}
\haz A_\epsi&=\frac{1}{\epsi}
\big(  (\I +\epsi\nabla \haz u_\epsi)(\I {-} \epsi  z_\epsi{+} 
\epsi 
w_\epsi)\exp(-\epsi \tilde z) -\I \big)\nonumber\\
&= \big(\nabla
\haz u_\epsi {-} z_\epsi {+} w_\epsi {-}\epsi \nabla \haz u_\epsi
z_\epsi + \epsi \nabla \haz u_\epsi w_\epsi 
\big)\exp(-\epsi \tilde z) \nonumber\\
&\quad+ \frac{1}{\epsi}\big( \exp(-\epsi \tilde z)
{-}\I\big)\quad \text{on} \ \  \Omega_\epsi \nonumber\\
\haz A_\epsi&=\frac{1}{\epsi}
\big(  (\I +\epsi\nabla \haz u_\epsi)(\I {-} \epsi  z_\epsi{+} 
\epsi 
w_\epsi)-\I \big)\nonumber\\
&= \nabla
\haz u_\epsi {-} z_\epsi {+} w_\epsi {-}\epsi \nabla \haz u_\epsi
z_\epsi {+} \epsi \nabla \haz u_\epsi w_\epsi 
 \quad \text{on} \ \ \Omega \setminus   \Omega_\epsi .\nonumber
\end{align}
Hence, one can compute that  
\begin{align}
 \haz A_\epsi - A_\epsi &= (\nabla \haz u_\epsi {-} \nabla
 u_\epsi)(\I{-}\epsi z_\epsi{+}\epsi w_\epsi) +
 \frac{1}{\epsi}\big(\exp(-\epsi \tilde z) {-} \I\big) \nonumber\\
&\quad +(\nabla \haz u_\epsi {-}z_\epsi{+}w_\epsi{-}\epsi \nabla \haz
u_\epsi z_\epsi{+} \epsi \nabla \haz u_\epsi w_\epsi) (\exp(-\epsi
\tilde z) {-} \I) \quad \text{on} \ \  \Omega_\epsi \nonumber\\ 
\haz A_\epsi - A_\epsi &= (\nabla \haz u_\epsi {-} \nabla
 u_\epsi)(\I{-}\epsi z_\epsi{+}\epsi w_\epsi) \quad \text{on} \ \
 \Omega\setminus  \Omega_\epsi .\nonumber
\end{align}
In particular, owing to convergence \eqref{nablau}  and the
$\LL^\infty$ bounds for $\epsi z_\epsi$ and $\epsi w_\epsi$ (see the
discussion after \eqref{we})  we have that $(\nabla \haz u_\epsi {-} \nabla
 u_\epsi)(\I{-}\epsi z_\epsi{+}\epsi w_\epsi)$ converges to $\nabla \tilde u$
 strongly in $\LL^2$. Thus,  by recalling that $w_\epsi \to 0$
 weakly in $\LL^2$  
 it is a standard matter to check that 
\begin{align}
&\haz A_\epsi + A_\epsi \to  (\nabla\haz u_0 {-} \haz z_0) +
(\nabla u_0 {-}  z_0) \ \ \text{weakly in} \ \ \LL^2(\Omega; \Rzn),\label{conv1}\\
&\haz A_\epsi - A_\epsi \to  \nabla \tilde u  - \tilde z \ \ \text{strongly in} \ \ \LL^2(\Omega; \Rzn).\label{conv2}
\end{align}

On the good set $U_\epsi^\delta$ we will use the assumption \eqref{approx}
in order to have that 
\begin{align}
 &\Wel^\epsi(\haz A_\epsi) -  \Wel^\epsi( A_\epsi) \leq
 |\haz A_\epsi|^2_\C - | A_\epsi|^2_\C + \delta
 \big(|\haz A_\epsi|^2_\C + |A_\epsi|_\C^2 \big)  \nonumber\\
&= \frac12
 (\haz A_\epsi {-} A_\epsi){:}\C (\haz A_\epsi {+} A_\epsi) + \delta
 \big(|\haz A_\epsi|^2_\C + |A_\epsi|_\C^2 \big) .\label{qq}
\end{align}

Let us now argue on the bad set $\Omega \setminus U^\delta_\epsi$ by defining
$$ G_{1,\epsi}:= (\I{+}\epsi \nabla \haz u_\epsi) (\I {+} \epsi \nabla
u_\epsi)^{-1}, \quad G_{2,\epsi}:= (\I {+} \epsi  z_\epsi) (\I
{+} \epsi \haz z_\epsi)^{-1}.$$
 The energy bound \eqref{enbound}, together with assumption
\eqref{smoothness}, implies that $\I{+}\epsi \nabla u_\epsi$ is
invertible almost everywhere.  
Note that $G_{1,\epsi}$ and $G_{2,\epsi}$ are
chosen in such a way that $\haz F_{\el,\epsi} = G_{1,\epsi}F_{\el,\epsi} G_{2,\epsi}$.
We readily compute that 
\begin{align}
 G_{1,\epsi} -\I&= \nabla \psi_\epsi(\varphi_\epsi)\nabla \varphi_\epsi (\I {+} \epsi \nabla
u_\epsi)^{-1} - \I= \nabla \psi_\epsi(\varphi_\epsi) -\I = \epsi \nabla
\tilde u (\varphi_\epsi)\nonumber
\end{align}
so that $ \|G_{1,\epsi} {-} \I\|_{\LL^\infty(\Omega \setminus U^\delta_\epsi;\Rzn)} = \epsi
\|\nabla \tilde u (\varphi_\epsi)\|_{\LL^\infty(\Omega \setminus U^\delta_\epsi;\Rzn)} \leq
c\epsi$. Moreover, one has that
$$G_{2,\epsi}= 
\left\{
\begin{array}{ll}
\exp(-\epsi \tilde z) \quad &\text{on}  \ (\Omega \setminus
U^\delta_\epsi) \cap  \Omega_\epsi ,\\
\I & \text{on} \ \Omega \setminus (U^\delta_\epsi \cup
\Omega_\epsi). 
\end{array}
\right.
$$
Hence, $ \| G_{2,\epsi} - \I \|_{\LL^\infty(\Omega \setminus U^\delta_\epsi;\Rzd)} \leq
c\epsi$ as well. Next, estimate \eqref{key} and bound \eqref{enbound}
allow us to control the elastic part of the energy on the bad set
$\Omega \setminus U^\delta_\epsi$ (where $\nabla u_\epsi$ and
$z_\epsi$ are not under control) by cancellation. For this we employ the
multiplicative estimate \eqref{Mandel} provided in \eqref{key}:
\begin{align}
&\int_{\Omega \setminus U^\delta_\epsi} 
\big(\Wel^\epsi(\haz A_\epsi)-\Wel^\epsi(A_\epsi)\big) \dx =  \frac{1}{\epsi^2}\int_{\Omega \setminus U^\delta_\epsi} \big(\Wel(\haz F_{\el,\epsi})-\Wel(F_{\el,\epsi})\big) \dx\nonumber\\
&=
 \frac{1}{\epsi^2} \int_{\Omega \setminus U^\delta_\epsi} \big(
  \Wel(G_{1,\epsi} F_{\el,\epsi} G_{2,\epsi}) -
 \Wel(F_{\el,\epsi}) \big)  \dx
 \nonumber\\
& \stackrel{\eqref{key}}{\leq} \frac{c_7}{\epsi^2} \int_{\Omega \setminus U^\delta_\epsi} \big(\Wel (F_{\el,\epsi}) + c_8\big) \big( |G_{1,\epsi} {-}
\I| + | G_{2,\epsi} {-} \I|\big)\dx\nonumber\\
& \leq c_7 \left( \frac{1}{\epsi^2} \int_{\Omega} \Wel (F_{\el,\epsi})
  \dx  + \frac{c_8}{\epsi^2}|\Omega \setminus U^\delta_\epsi|\right)\big(\|G_{1,\epsi} {-}
\I\|_{\LL^\infty} + \| G_{2,\epsi} {-}
\I\|_{\LL^\infty} \big) \nonumber\\
&\stackrel{ \eqref{enbound} \&  \eqref{cU}}{\leq} c \left(
  1{+} \frac{1}{c_{\rm el}^2(\delta)}\right) \epsi\label{key_conv}.
\end{align}
Thus, we have controlled the difference of the energy contributions in
the bad set $\Omega\setminus U^\delta_\epsi$ where the gradients are big.

Finally, by using 
convergences \eqref{conv1}-\eqref{conv2}, equation \eqref{qq} on
the good set $U^\delta_\epsi $, relation
\eqref{key_conv} on the bad set $\Omega \setminus U^\delta_\epsi$,
 and the $\LL^2$ boundedness of $\haz A_\epsi$ and $A_\epsi$, 
we conclude that
\begin{align}
&\limsup_{\epsi \to 0}\left( \int_{\Omega} \Wel^\epsi(\haz A_\epsi)\dx -
 \int_{\Omega} \Wel^\epsi(A_\epsi)\dx \right) \nonumber\\
&\stackrel{\eqref{qq}}{\leq} \limsup_{\epsi \to 0}\Bigg(  \frac12  \int_{U_\epsi^\delta} \big(\haz A_\epsi {-} A_\epsi\big){:}\C
\big(\haz A_\epsi {+} A_\epsi\big) \dx  +  c\delta  \nonumber\\
&\hspace{23mm}+ \int_{\Omega\setminus U^\delta_\epsi}
\big(\Wel^\epsi(\haz A_\epsi)- \Wel^\epsi(A_\epsi)\big) \dx
\Bigg)\nonumber\\
& \stackrel{\eqref{key_conv} }{ \leq } \limsup_{\epsi\to  0 }\Bigg(  \frac12  \int_{U_\epsi^\delta} \big(\haz A_\epsi {-} A_\epsi\big){:}\C
\big(\haz A_\epsi {+} A_\epsi\big) \dx  +  c\delta  +
c\left(1{+} \frac{1}{c_{\rm el}^2(\delta)} \right) \epsi\Bigg)\nonumber\\
& \leq  \frac12\int_\Omega \big( \nabla \tilde u - \tilde z \big){:}\C \big( \nabla (\haz u_0 {+} u_0) - (\haz z_0  {+}
z_0) \big) \dx  +  c\delta \nonumber\\
& = \int_\Omega |\nabla \haz u_0^\sy  {-} \haz z_0^\sy|_\C^2 \dx -
\int_\Omega |\nabla u_0^\sy  {-}  z_0^\sy|_\C^2 \dx  + c\delta\label{E0}
\end{align}
where we have made use of relation \eqref{minor}.

{\it Step 4: Upper bound on the hardening energy term.}
Let us now turn our attention to the
hardening part of the energy. On the good set $Z_\epsi^\delta$ we have that
\begin{align}
 \Wh^\epsi(\haz z_\epsi) - \Wh^\epsi(z_\epsi) &\leq |\haz z_\epsi|^2_\H  -
 |z_\epsi|^2_\H+  \delta
 \big(|\haz z_\epsi|^2_\H + |z_\epsi|_\H^2 \big)\nonumber\\
&=\frac12
 (\haz z_\epsi{-}z_\epsi){:}\H(\haz z_\epsi{+}z_\epsi) +  \delta
 \big(|\haz z_\epsi|^2_\H + |z_\epsi|_\H^2 \big).  \label{qq2}
\end{align}
As regards the bad set $\Omega\setminus Z_\epsi^\delta$ one has
that 
\begin{align}
 \Wh^\epsi(\haz z_\epsi) {-} \Wh^\epsi(z_\epsi)
=
\left\{
\begin{array}{ll}
    \disp\frac{1}{\epsi^2}\Whf(\exp(\epsi \tilde z) (\I{+}\epsi z_\epsi)) -
    \frac{1}{\epsi^2}\Whf( \I{+}\epsi z_\epsi)   &\text{on} \ 
    (\Omega\setminus Z_\epsi^\delta)\cap  \Omega_\epsi \\0  \
    &\text{on} \
\Omega\setminus (Z_\epsi^\delta \cup  \Omega_\epsi ).
\end{array}
\right.\nonumber
\end{align}
Hence, by exploiting the local Lipschitz continuity of
$\Whf$ we have that 
\begin{align}
&\int_{\Omega\setminus Z_\epsi^\delta} \big(\Wh^\epsi(\haz
  z_\epsi)-  \Wh^\epsi(z_\epsi) \big) \dx
 \leq \frac{c}{\epsi^2} \int_{\Omega\setminus Z_\epsi^\delta} |\exp(\epsi
\tilde z) - \I|\, |\I {+}\epsi z_\epsi| \dx \nonumber\\
& \leq \frac{c}{\epsi^2} |\Omega\setminus
Z_\epsi^\delta| \, \frac{c\epsi}{ c_{\rm h}^2(\delta) }
\stackrel{\eqref{cZ}}{\leq} \frac{c\epsi}{ c_{\rm h}^2(\delta) }.\label{key_conv2}
\end{align}

Eventually, owing to convergences  \eqref{convz}-\eqref{convz0}
 we compute that 
\begin{align}
 &\limsup_{\epsi \to 0}\left(\int_\Omega \Wh^\epsi(\haz z_\epsi)\dx
   - \int_\Omega \Wh^\epsi(z_\epsi) \dx \right)\nonumber\\
&\stackrel{\eqref{qq2}}{\leq}  \limsup_{\epsi \to
  0}\Bigg(\int_{Z_\epsi^\delta} \frac12 (\haz
  z_\epsi{-}z_\epsi){:}\H(\haz z_\epsi{+}z_\epsi) \dx + c\delta
    \nonumber\\
&\hspace{23mm}+\int_{\Omega\setminus Z_\epsi^\delta} \big(\Wh^\epsi(\haz
  z_\epsi)-  \Wh^\epsi(z_\epsi) \big) \dx
\Bigg)\nonumber\\
& \stackrel{\eqref{key_conv2}}{ \leq}  \limsup_{\epsi \to
  0}\left(\int_{Z_\epsi^\delta} \frac12 (\haz
  z_\epsi{-}z_\epsi){:}\H(\haz z_\epsi{+}z_\epsi) \dx + c\delta
   +  \frac{c\epsi}{c_{\rm el}^2(\delta)}\right)\nonumber\\
&= \int_{\Omega} \frac12 \tilde z{:}\H(\haz z_0{+}z_0) \dx  + c\delta
    = \int_{\Omega}|\haz z_0|^2_\H \dx-\int_{\Omega}| z_0|^2_\H \dx + c\delta.
   \label{E02}
\end{align}

{\it Step 5: Conclusion of the proof.} By collecting relations 
\eqref{E0} and \eqref{E02}, and recalling that $\lan \ell(t), u_\epsi -
\haz u_\epsi\ran \to \lan \ell(t), u_0 -
\haz u_0\ran $ we have proved that 
\begin{align}
 &\limsup_{\epsi \to 0}\Big(\EE_\epsi (t,\haz u_\epsi, \haz z_\epsi) {-}
 \EE_\epsi(t,u_\epsi,z_\epsi)\Big) \leq \Big(\EE_0 (t,\haz u_0, \haz z_0) {-}
 \EE_0(t,u_0,z_0)
 \Big) +  c\delta . \nonumber
\end{align}
Finally, the assertion \eqref{mrs} follows by taking $\delta \to 0$
and employing \eqref{D0}.
\end{proof}

%%%%%%%%%%%%%%%%%%%%%%%%%%%%%%%%
\subsection{Proof of Theorem {\ref{main}}}\label{sec:proof}

Owing to the the above-obtained $\Gliminf$ and mutual-recovery-sequence results, the proof of Theorem \ref{main} now follows along 
the lines of the general theory of \cite{mrs}. We limit
ourselves  to sketch  the main points of the argument and refer the
reader to \cite{mrs} for the details.

Let $(u_\epsi,z_\epsi)$ be a sequence of finite-plasticity solutions. 
The coercivity of the energy \eqref{bound} entails an a priori bound
on $(u_\epsi,z_\epsi)$. In particular, we have the following.

\begin{corollary}[A priori bound] There exists $c>0$ such that all
  finite-plasticity solutions $(u_\epsi,z_\epsi)$ fulfill
  \begin{equation}
    \label{bound2}
  \forall t \in [0,T]: \quad  \| u_\epsi(t)\|_{\HH^1}+ \|z_\epsi(t)\|_{\LL^2} + \| \epsi
    z_\epsi(t)\|_{\LL^\infty} + {\rm
      Diss}_{\DD_\epsi}(z_\epsi;[0,t]) \leq c.
  \end{equation}
\end{corollary}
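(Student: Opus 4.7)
The plan is to derive the a priori bound by combining the energy--dissipation balance \eqref{energy} with the coercivity Lemma \ref{encoerc} and a Gronwall-type argument on the loading integral. Throughout, I use the Poincaré inequality on $\UU$, guaranteed by $\mathcal H^{d-1}(\Gamma)>0$, so that $\|u\|_{\UU}\le c\|\nabla u\|_{\LL^2}$.

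First, I rewrite the energy--dissipation balance as
\[
\WW_\epsi(u_\epsi(t),z_\epsi(t)) + \Diss_{\DD_\epsi}(z_\epsi;[0,t])
 = \WW_\epsi(u^0_\epsi,z^0_\epsi) + \lan \ell(t),u_\epsi(t)\ran - \lan\ell(0),u^0_\epsi\ran - \int_0^t \lan\dot\ell(s),u_\epsi(s)\ran\ds.
\]
From \eqref{initial_data} and the fact that $\ell(0)\in\UU'$ with $(u^0_\epsi)$ bounded in $\UU$, the initial terms are controlled by a constant $C_0$ independent of $\epsi$. For the instantaneous boundary term I apply Lemma \ref{encoerc} in the form $\|u_\epsi(t)\|_{\UU}\le c(1+\WW_\epsi(u_\epsi(t),z_\epsi(t)))^{1/2}$ together with Young's inequality to absorb half of the resulting $\WW_\epsi$ into the left-hand side.

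Set $W_\epsi(t) := \WW_\epsi(u_\epsi(t),z_\epsi(t))$. The above manipulations yield an integral inequality of the form
\[
W_\epsi(t) + \Diss_{\DD_\epsi}(z_\epsi;[0,t]) \le C_1 + C_2\int_0^t \|\dot\ell(s)\|_{\UU'}\bigl(1+W_\epsi(s)^{1/2}\bigr)\ds,
\]
with $C_1,C_2$ independent of $\epsi$. Since $\dot\ell\in\LL^1(0,T;\UU')$, a standard Bihari/Gronwall argument applied to $\sqrt{1+W_\epsi(t)}$ gives $\sup_{t\in[0,T]}W_\epsi(t)\le c$. Inserting this back into the integral inequality also yields the uniform bound on $\Diss_{\DD_\epsi}(z_\epsi;[0,t])$.

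Finally, Lemma \ref{encoerc} translates the uniform bound on $W_\epsi$ directly into the required estimates on $\|\nabla u_\epsi(t)\|_{\LL^2}$, $\|z_\epsi(t)\|_{\LL^2}$, and $\|\epsi z_\epsi(t)\|_{\LL^\infty}$; Poincaré promotes the gradient bound to an $\HH^1$ bound on $u_\epsi(t)$. The only nontrivial point is the closure of the Gronwall step with a square-root nonlinearity, but this is routine for $\dot\ell\in\LL^1$; in particular, no new estimates on the multiplicative structure are required beyond those already used to establish Lemma \ref{encoerc}.
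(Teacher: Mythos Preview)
Your proof is correct and follows essentially the same route as the paper's: combine the energy balance \eqref{energy} with the coercivity estimate \eqref{bound}, control the loading terms via $\ell\in W^{1,1}(0,T;\UU')$ and Poincar\'e, and close with a Gronwall-type argument. The paper keeps the boundary term $\lan\ell(t),u_\epsi(t)\ran$ on the right and bounds everything in one line before invoking ``Gronwall Lemma'', whereas you unpack the absorption via Young's inequality and name the Bihari step explicitly; these are cosmetic differences, not substantive ones.
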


\begin{proof}
We exploit the energy balance \eqref{energy} and the bound \eqref{bound}
in order to get that, for all $t\in [0,T]$, 
\begin{align}
 & \|\nabla u_\epsi(t) \|^2_{\LL^2} +  \| z_\epsi(t)\|^2_{\LL^2} + \| \epsi
 z_\epsi(t)\|^2_{\LL^\infty} +{\rm
   Diss}_{\DD_\epsi}(z_\epsi;[0,t])\nonumber\\
& \stackrel{\eqref{bound}}{\leq}  c\big(1{+}
 \WW_\epsi(u_\epsi(t),z_\epsi(t))\big) +{\rm Diss}_{\DD_\epsi}(z_\epsi;[0,t])\nonumber\\
&\leq c\big( 1 + \EE_\epsi(t,u_\epsi(t),z_\epsi(t)) + \lan \ell(t),
 u_\epsi (t)\ran
+{\rm Diss}_{\DD_\epsi}(z_\epsi;[0,t]) \big)
\nonumber\\
& \stackrel{\eqref{energy}}{=} c \left(1+
\EE_\epsi(0,u^0_\epsi,z^0_\epsi)  + \lan \ell(t),  u_\epsi  (t) \ran- \int_0^t \lan
\dot \ell,  u_\epsi  \ran \ds\right)\nonumber \\
&\leq c \left(1+
\EE_\epsi(0,u^0_\epsi,z^0_\epsi)  + \|
\ell(t)\|_{\HH^{-1}}\| u_\epsi  (t)\|_{\HH^1} + \int_0^t \| \dot \ell\|_{\HH^{-1}}\| u_\epsi \|_{\HH^1} \ds\right)\nonumber 
\end{align}
so that the assertion follows by Gronwall Lemma.
\end{proof}

Owing to the a priori bound \eqref{bound2}, we may now exploit the
generalized version of Helly's Selection Principle in
\cite[Thm. A.1]{mrs} (consider also the comments thereafter) and deduce
that, at least for some nonrelabeled subsequence, and all $s,\,t \in
[0,T]$ with $s<t$,
\begin{align}
& \delta_0(t):= \lim_{\epsi \to 0} {\rm Diss}_{\DD_\epsi}(z_\epsi;[0,t]),\nonumber\\
  & z_\epsi(t) \to z_0(t) \quad \text{weakly
    in} \ \ \ZZ,\nonumber\\
& {\rm
  Diss}_{\DD_0}(z_0;[s,t]) \leq\delta_0 (t) - \delta_0(s),\nonumber 
\end{align}

Moreover, by letting $t\in[0,T]$ be fixed we may extract a further
subsequence (still not relabeled, possibly depending on $t$) such that 
$u_\epsi(t)\to u_*$ weakly in $\UU$. 
We now check that indeed
$(u_*,z_0(t)) \in \SS_0(t)$. To this aim, by density it suffices to
consider competitors $(\haz
u_0,\haz z_0) =(u_*,z_0(t))   +  (\tilde u,\tilde z) $ with
$(\tilde u, \tilde z)$ smooth and compactly supported. By applying
Lemma \ref{mutual} we find
a mutual recovery sequence $(\haz u_\epsi, \haz
z_\epsi)$ such that 
\begin{align}
 & \EE_0(t,\haz u_0,\haz z_0) - \EE_0(t,u_*,z_0(t)) + \DD_0(z_0(t),\haz z_0)
  \nonumber\\
&\geq \limsup_{\epsi\to 0}\left( \EE_\epsi(t,\haz u_\epsi,\haz
    z_\epsi) - \EE_\epsi(t,u_\epsi(t),z_\epsi(t)) + \DD_\epsi(z_\epsi(t),\haz
    z_\epsi)  \right) \geq 0\label{crux}
\end{align}
where the last inequality follows from the stability \eqref{stability}
of 
$(u_\epsi(t),z_\epsi(t))$. Hence, we have proved that $(u_*,z_0(t))\in
\SS_0(t)$. Note that, given $z_0(t) \in \ZZ$, as the
functional $ u \in \UU \mapsto \EE_0(t,u,z_0(t))$ is uniformly convex
 there exists a
    unique $u_0(t) \in \UU$ such that $(u_0(t),z_0(t))\in
    \SS_0(t)$. From the fact that $(u_*,z_0(t)) \in \SS_0(t)$ we
    conclude that $u_*\equiv u_0(t)$. In particular $u_\epsi (t) \to
    u_0(t)$ weakly in $\UU$
    for all $t \in [0,T]$ and the whole sequence converges.

 Let now be given a partition $\{0 = t_0<
t_1< \dots< t_N=t\}$. By passing to the $\liminf$ in the
energy balance \eqref{energy} and using Lemmas \ref{Genergy} and
\ref{Gdissipation} we get that
\begin{align}
 &\EE_0(t,u_0(t),z_0(t)) + \sum_{i=1}^N\DD_0(z_0(t_i),z_0(t_{i-1}))
  \nonumber\\
&\leq \liminf_{\epsi \to 0}\left( \EE_\epsi(t,u_\epsi(t),z_\epsi(t)) +
  \sum_{i=1}^N\DD_\epsi(z_\epsi(t_i),z_\epsi(t_{i-1}))
\right)\nonumber\\
&\leq \liminf_{\epsi \to 0}\left( \EE_\epsi(0,u^0_\epsi,z^0_\epsi) -
  \int_0^t \lan \dot \ell, u_\epsi \ran \ds\right)=
\EE_0(0,u^0_0,z^0_0) - \int_0^t \lan \dot \ell, u_0 \ran \ds\nonumber
\end{align}
where for the last equality we have used \eqref{initial_data} and the convergence of
$u_\epsi$.
Hence, the upper energy estimate follows by taking the $\sup$ among
all partitions of the interval $[0,t]$.  The lower energy estimate can
classically recovered from stability as in
\cite[Prop. 2.7]{Mielke05}. This proves that $(u_0,z_0)$ is a linearized-plasticity solution. In particular, as linearized-plasticity solutions are
unique,  the whole sequence $(u_\epsi,z_\epsi)$
converges and no extraction of subsequences is actually needed.

Along the lines of the proof of Theorem \ref{main} (see also
\cite[Thm. 3.1]{mrs}) we also obtain the following convergences.

\begin{corollary}[Improved convergences] Under the assumptions of
  Theorem \emph{\ref{main}} we have that, for all $t \in [0,T]$,\label{ref}
  \begin{align}
  \int_\Omega \big(\Wel^\epsi(A_\epsi) + \Wh^\epsi(z_\epsi) \big)\dx
   & \to \int_\Omega \big(|\nabla u_0{-} z_0|^2_\C +
   |z_0|^2_\H\big)\dx,\label{energy_conv}\\
\Diss_{\DD_\epsi}(z_\epsi;[0,t])& \to \int_0^t R(\dot z)\ds.\label{diss_conv}
  \end{align}
\end{corollary}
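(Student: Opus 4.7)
The plan is to bootstrap the energy balance for finite plasticity together with the two separate $\Gliminf$ inequalities already established in Lemmas~\ref{Genergy} and \ref{Gdissipation}, and then squeeze using the energy balance for the limit system. The underlying observation is the following elementary fact: whenever a sum of two $\liminf$s coincides with the sum of their respective lower bounds, both $\liminf$s must be actual limits equal to their bounds.

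Fix $t \in [0,T]$. From Theorem~\ref{main} we have $u_\epsi(t) \to u_0(t)$ weakly in $\UU$ and $z_\epsi(t) \to z_0(t)$ weakly in $\ZZ$. Since $\ell(t) \in \UU'$ we deduce $\lan \ell(t), u_\epsi(t)\ran \to \lan \ell(t), u_0(t)\ran$, and by the uniform bound \eqref{bound2} combined with dominated convergence also $\int_0^t \lan \dot\ell, u_\epsi\ran\ds \to \int_0^t \lan \dot\ell, u_0\ran \ds$. Together with the convergence of the initial energies \eqref{initial_data}, this shows that the right-hand side of the energy balance \eqref{energy} for the finite-plasticity solution $(u_\epsi, z_\epsi)$ converges to the right-hand side of the energy balance of the unique linearized-plasticity solution $(u_0,z_0)$.

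Now apply Lemma~\ref{Genergy} (together with the convergence of the loading term) to obtain $\liminf_{\epsi\to 0}\EE_\epsi(t,u_\epsi(t),z_\epsi(t)) \geq \EE_0(t,u_0(t),z_0(t))$, and Lemma~\ref{Gdissipation} applied to the telescopic sums over any partition of $[0,t]$ (followed by a supremum) to obtain $\liminf_{\epsi\to 0}\Diss_{\DD_\epsi}(z_\epsi;[0,t]) \geq \Diss_{\DD_0}(z_0;[0,t])$. Summing these two $\liminf$ inequalities, invoking \eqref{energy} on the left and its $\epsi=0$ counterpart for the linearized-plasticity solution on the right, gives
\begin{align*}
&\EE_0(t,u_0(t),z_0(t)) + \Diss_{\DD_0}(z_0;[0,t])\\
&\qquad \leq \liminf_{\epsi \to 0}\big(\EE_\epsi(t,u_\epsi(t),z_\epsi(t)) + \Diss_{\DD_\epsi}(z_\epsi;[0,t])\big)\\
&\qquad = \EE_0(0,u_0^0,z_0^0) - \int_0^t \lan \dot\ell, u_0\ran \ds = \EE_0(t,u_0(t),z_0(t)) + \Diss_{\DD_0}(z_0;[0,t]).
\end{align*}
Thus the squeeze argument forces both $\liminf$s to be actual limits equal to their respective lower bounds. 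Subtracting the continuous loading term $\lan \ell(t), u_\epsi(t)\ran \to \lan \ell(t), u_0(t)\ran$ from the convergence of $\EE_\epsi$ yields \eqref{energy_conv}, while the convergence of $\Diss_{\DD_\epsi}$ together with the identity $\Diss_{\DD_0}(z_0;[0,t]) = \int_0^t R(\dot z_0)\ds$ recalled after Definition~2.1 yields \eqref{diss_conv}.

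The bulk of the work was already carried out in Subsections~\ref{sec:Gliminf}--\ref{sec:mutual} (in particular the mutual recovery sequence and the Mandel-type cancellation of the bad-set contributions), so no new obstacle is expected here; the only bookkeeping subtlety is to verify that the load contribution converges strongly enough to be absorbed into each $\Gliminf$, which however follows directly from \eqref{ell} and the a priori bound \eqref{bound2}.
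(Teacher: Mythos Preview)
Your argument is correct and is precisely the standard squeeze argument that the paper alludes to by writing ``Along the lines of the proof of Theorem~\ref{main} (see also \cite[Thm.~3.1]{mrs})'' in lieu of a proof. The paper does not spell out the details, but the intended reasoning is exactly the one you give: the energy balance \eqref{energy} together with \eqref{initial_data} and the convergence of the power term forces the sum $\EE_\epsi+\Diss_{\DD_\epsi}$ to converge, and the two separate $\Gliminf$ inequalities from Lemmas~\ref{Genergy} and~\ref{Gdissipation} then pin each summand to its limit.
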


In particular, owing to the energy convergence \eqref{energy_conv} we
are in the position of deducing some strong convergence of
finite-plasticity solutions to linearized-plasticity solutions.

\begin{corollary}[Strong convergence]
  Under the assumptions of Theorem \emph{\ref{main}} for all $t \in [0,T]$ we have that $(u_\epsi(t),z_\epsi(t))
\to (u_0(t),z_0(t))$ strongly in ${\rm W}^{1,p}(\Omega; \Rz^d)\times
\LL^p(\Omega;\Rzn)$ for all $p \in [1,2)$.\label{ref2}
\end{corollary}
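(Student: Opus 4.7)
The plan is to combine Corollary \ref{ref} with the two separate $\Gliminf$ estimates implicit in Lemma \ref{Genergy} so as to derive the individual convergences
\begin{align*}
\int_\Omega \Wel^\epsi(A_\epsi) \dx \to \int_\Omega |\nabla u_0 {-} z_0|^2_\C \dx, \qquad
\int_\Omega \Wh^\epsi(z_\epsi) \dx \to \int_\Omega |z_0|^2_\H \dx.
\end{align*}
This rests on the elementary remark that if $\liminf a_\epsi \geq a$, $\liminf b_\epsi \geq b$, and $a_\epsi {+} b_\epsi \to a{+}b$, then $a_\epsi \to a$ and $b_\epsi \to b$ individually.

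To upgrade the weak $\LL^2$-convergence $z_\epsi \rightharpoonup z_0$ to the announced strong $\LL^p$-convergence with $p<2$, I would employ a Young-measure argument. Since $\{z_\epsi\}$ is bounded in $\LL^2(\Omega;\Rzn)$ by Lemma \ref{encoerc}, up to a subsequence it generates a Young measure $\{\nu_x\}_{x\in \Omega}$ with barycenter $z_0(x) = \int_{\Rzn} \lambda\,\mathrm{d}\nu_x(\lambda)$. The locally uniform convergence $\Wh^\epsi \to |\cdot|^2_\H$ of Lemma \ref{primo}, together with the quadratic coercivity \eqref{coercwh}, yields the standard lower-semicontinuity chain
\[
\int_\Omega |z_0|^2_\H \dx \,=\, \lim_{\epsi\to 0}\int_\Omega \Wh^\epsi(z_\epsi) \dx \,\geq\, \int_\Omega \!\! \int_{\Rzn} |\lambda|^2_\H \,\mathrm{d}\nu_x(\lambda)\, \dx \,\geq\, \int_\Omega |z_0|^2_\H \dx,
\]
the last inequality being Jensen's. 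Strict convexity of $|\cdot|^2_\H$ on $\Rzn$ (from \eqref{posdef2}) then forces $\nu_x = \delta_{z_0(x)}$ for a.e. $x$, so $z_\epsi \to z_0$ in measure; the $\LL^2$-bound provides $\LL^p$-equi-integrability for every $p<2$, and Vitali's convergence theorem delivers the strong $\LL^p$-convergence of $z_\epsi$ to $z_0$.

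The same scheme applied to $A_\epsi$ produces $\int_\Omega\int_{\Rzn}|\lambda|^2_\C\,\mathrm{d}\nu_x(\lambda)\dx = \int_\Omega|\nabla u_0 {-} z_0|^2_\C \dx$, but since $|\cdot|^2_\C$ is only strictly convex on $\Rzs$ (compare \eqref{minor} and \eqref{posdef}), this identifies only the symmetric marginal of the Young measure as atomic, giving $A_\epsi^\sy \to (\nabla u_0 {-} z_0)^\sy$ strongly in $\LL^p$. The decomposition $A_\epsi = \nabla u_\epsi {-} z_\epsi {+} w_\epsi {-} v_\epsi$ from the proof of Lemma \ref{Genergy}, combined with the $\LL^1/\LL^\infty$ interpolation bounds for $w_\epsi$ and $v_\epsi$ established there, yields $A_\epsi {-} (\nabla u_\epsi {-} z_\epsi) \to 0$ strongly in $\LL^p$ for every $p<2$. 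Combining this with the strong $\LL^p$-convergence of $z_\epsi$ already obtained, one concludes $(\nabla u_\epsi)^\sy \to (\nabla u_0)^\sy$ strongly in $\LL^p$, and Korn's inequality in $\LL^p$ on $\UU$ upgrades this to the announced ${\rm W}^{1,p}$-convergence $u_\epsi \to u_0$.

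The main technical point concerns the restriction $p<2$. Strong $\LL^2$-convergence is not accessible: the bad sets $\Omega \setminus Z_\epsi^\delta$ and $\Omega\setminus U_\epsi^\delta$ on which the Taylor expansions \eqref{approx2} and \eqref{approx} fail have measure only $O(\epsi^2)$, yet $|z_\epsi|$ and $|A_\epsi|$ may be of order $O(1/\epsi)$ there, leaving $O(1)$ residual $\LL^2$-contributions (concentration). The recourse to Young measures combined with Vitali's theorem in $\LL^p$ for $p<2$ is precisely what allows one to bypass this concentration obstruction while still exploiting the energy convergence.
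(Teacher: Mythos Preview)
Your argument is correct and follows essentially the same route as the paper: Young measures plus strict convexity force concentration at the barycenter, the remainder terms $w_\epsi,v_\epsi$ vanish strongly in $\LL^p$ by interpolation, and Korn's inequality closes the argument. The only organizational differences are that the paper treats $(A_\epsi^\sy,z_\epsi)$ via a single joint Young measure (rather than first splitting the energy convergence \eqref{energy_conv} into its elastic and hardening parts) and invokes \cite[Thm.~5.4.4.iii]{Ambrosio-et-al08} in place of your convergence-in-measure plus Vitali step; both variants are equivalent here.
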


\begin{proof}
Let $\nu$ denote the Young measure generated by the sequence
$(A_\epsi,z_\epsi)$ and define the measure $\nu^\sy(A^s,Z) := \nu(A^s {\oplus}
\Rzn_\an,Z)$ for all Borel sets $(A^s,Z )\subset \Rzs \times
\Rzn$. Note that $\nu^\sy$ is indeed the Young measure
generated by $(A_\epsi^\sy,z_\epsi)$. By using the lower
semicontinuity Lemma \ref{balder} and the energy convergence
\eqref{energy_conv} we deduce that
\begin{align}
&  \int_\Omega \left( \int_{\Rzs\times\Rzn} \big(|A^\sy|^2_\C + |z|^2_\H\big)
    {\rm d} \nu^\sy_x(A^\sy,z)\right)\dx \nonumber\\
&=  \int_\Omega \left( \int_{\Rzn\times\Rzn} \big(|A|^2_\C + |z|^2_\H\big)
    {\rm d} \nu_x(A,z)\right)\dx \nonumber\\
&\leq \liminf_{\epsi\to 0}\int_\Omega \big(\Wel^\epsi(A_\epsi) + \Wh^\epsi(z_\epsi) \big)\dx \stackrel{\eqref{energy_conv}}{=} \int_\Omega \big(|\nabla u_0 {-} z_0|^2_\C + |z_0|^2_\H\big)
    \dx.\label{young}
\end{align}
Recall from \eqref{Aepsi} that 
$$A_\epsi^\sy = \nabla u_\epsi^\sy - z_\epsi^\sy - \epsi (\nabla u_\epsi z_\epsi{-} \nabla
u_\epsi w_\epsi)^\sy$$
where the remainder term $\epsi (\nabla u_\epsi z_\epsi{-} \nabla
u_\epsi w_\epsi)^\sy$ converges strongly to $0$ in $\LL^p$ for all $p
\in [1,2)$. Hence, the barycenter of $\nu^\sy$ is clearly $(\nabla
u_0^\sy {-} z^\sy_0,z_0)$.

We readily check that the measure $\nu^\sy$
is concentrated in its barycenter. Indeed, if this was not the case, by
uniform convexity we would have that
\begin{align}
   &\int_\Omega \big(|\nabla u_0^\sy {-} z_0^\sy|^2_\C + |z_0|^2_\H \big)\dx
   \nonumber\\
&< \int_\Omega \left( \int_{\Rzs\times\Rzn} \big(|A^\sy|^2_\C + |z|^2_\H\big)
    {\rm d} \nu^\sy_x(A^\sy,z)\right)\dx\nonumber
\end{align}
contradicting relation \eqref{young}. Here we have used positive
definiteness from \eqref{posdef} and \eqref{posdef2}. As $\nu^\sy$ is concentrated, we
exploit \cite[Thm. 5.4.4.iii,
p. 127]{Ambrosio-et-al08} and deduce that
$$ \int_\Omega f(x,A^\sy_\epsi(x),z_\epsi(x))\dx \to \int_\Omega
\left( \int_{\Rzs\times \Rzn} f(x,A^\sy,z) {\rm d }
  \nu^\sy_x(A^\sy,z)\right) \dx$$
along with the choice
$$ f(x,A^\sy,z) := \big|(\nabla u^\sy_0(x) {-} z_0^\sy(x),z_0^\sy(x)) - (A^\sy{-}z^\sy,z)\big|^p.$$
Hence, we have that 
$(A_\epsi^\sy,z_\epsi) \to (\nabla u_0^\sy {-} z_0^\sy,z_0)$ strongly
in $\LL^{p}(\Omega;\Rzs)\times \LL^p (\Omega;\Rzn)$ for all $p\in[1,2)$. In particular, $$\nabla u_\epsi^\sy = A_\epsi^\sy +z_\epsi^\sy +\epsi (\nabla u_\epsi z_\epsi{-} \nabla
u_\epsi w_\epsi)^\sy \to \nabla u_0^\sy \ \ \text{strongly in}  \ \
\LL^p$$ 
for all
$p\in[1,2)$ and the assertion follows by Korn's inequality.
\end{proof}

%%%%%%%%%%%%%%%%%%%%%%%%%%%%%%%%%%%%%%
\subsection{Sketch of the proof of Theorem \ref{main2}}\label{sec:proof2}
The argument for Theorem \ref{main} can be adapted to prove
Theorem \ref{main2} as well. The only notable difference is that one has to cope with
the fact that the piecewise constant interpolants $(\ove u_\epsi, \ove
z_\epsi)$ of the approximate incremental minimizers need not be
stable but rather just {\it approximately stable}. More precisely, from \eqref{AIP} and the triangle inequality we
have that 
$$\forall (\haz u,\haz z) \in \QQ:  \EE_\epsi(t, \haz u, \haz
z) -  \EE_\epsi(t, \ove u_\epsi(t), \ove
z_\epsi(t)) + \DD_\epsi(\ove z_\epsi(t),\haz z) \geq -
\tau_\epsi\alpha_\epsi.$$
By coordinating to the sequence $(\ove u_\epsi(t),\ove z_\epsi(t))$
a mutual recovery sequence $(\haz u_\epsi, \haz z_\epsi)$ via Lemma
\ref{mutual} (with $(\ove u_\epsi(t),\ove z_\epsi(t))$
instead of $( u_\epsi(t), z_\epsi(t))$) the lower bound
\eqref{crux} still follows as $\tau_\epsi \alpha_\epsi
\to 0$. Hence, the
stability of the limit can be recovered.
Finally, improved and strong convergences in the spirit of Corollaries
\ref{ref}-\ref{ref2} can be established as well.

\subsection*{Acknowledgement}
We are gratefully indebted to the Referee for her/his careful reading of the manuscript.

%%%%%%%%%%%%%%%%%%%%%%%%%%%%%%%%%%%%%%
\section{Appendix}
\setcounter{equation}{0}

\subsection{Estimate on left and right multiplication}
In the proof of Theorems \ref{main}-\ref{main2}
we have made use of the following estimate combining left and right
multiplication.

\begin{lemma}\label{lemmakey}
  Assume \eqref{smoothness} and \eqref{Mandel}. Then,
\begin{align}
&\exists c_7, \, c_8, \gamma >0  \ \forall G_1, \, G_2\in B_\gamma(\I) \ \forall F \in
\GLp:\nonumber\\
&|\Wel(G_1 F G_2) - \Wel(F)| \leq c_7(W(F) + c_8) \big( |G_1 {-} \I| + |G_2 {-} \I|\big).  \label{key}  
\end{align}
\end{lemma}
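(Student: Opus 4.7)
The strategy is to split via triangle inequality
\[
|\Wel(G_1 F G_2) - \Wel(F)| \leq |\Wel(G_1 F G_2) - \Wel(G_1 F)| + |\Wel(G_1 F) - \Wel(F)|
\]
and to estimate each one-sided multiplication by integrating along an exponential path in $\GLp$ and applying Grönwall's inequality. For the right factor I would parametrize $F(t) = F \exp(t \log G_2)$, which lies in $\GLp$ for $|G_2 - \I| < \gamma$ small enough. The chain rule gives
\[
\frac{\mathrm d}{\mathrm d t}\Wel(F(t)) = \bigl(F(t)^\t \partial_F \Wel(F(t))\bigr) : \log G_2,
\]
so the Mandel bound \eqref{Mandel}, combined with $|\log G_2| \leq 2|G_2 - \I|$, yields $\frac{\mathrm d}{\mathrm d t}(\Wel(F(t)) + c_3) \leq 2 c_2 |G_2 - \I| (\Wel(F(t)) + c_3)$. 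Grönwall then produces both directions of $|\Wel(F G_2) - \Wel(F)| \leq C(\Wel(F) + c_3)|G_2 - \I|$.

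The left-multiplication step is the main obstacle, since \eqref{Mandel} naturally controls only right perturbations. To bootstrap I would invoke frame indifference \eqref{frame}, which is in force throughout the paper: the polar decomposition $G_1 = R_1 U_1$ gives $\Wel(G_1 F) = \Wel(U_1 F)$ with $U_1 = \sqrt{G_1^\t G_1}$ symmetric and $|U_1 - \I| \leq C|G_1 - \I|$. Setting $S = \log U_1$ (symmetric) and $F(t) = \exp(tS) F$, one obtains $\frac{\mathrm d}{\mathrm d t} \Wel(F(t)) = \bigl(\partial_F \Wel(F(t))\, F(t)^\t\bigr) : S$, so the core estimate to establish is $|\partial_F \Wel(F)\, F^\t| \leq c_2 (\Wel(F) + c_3)$. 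For this, frame indifference yields $\partial_F \Wel(F) = 2 F\, \partial_C \hat W(F^\t F)$ for some $\hat W$, so with $A = \partial_C \hat W$ symmetric and $C = F^\t F$, a cyclic-trace computation gives
\[
|F A F^\t|^2 = \tr((CA)^2) = \sum_i \mu_i(CA)^2 \leq \sum_i \sigma_i(CA)^2 = |CA|^2,
\]
using that $CA$ is similar to the symmetric $C^{1/2} A C^{1/2}$ (hence has real eigenvalues $\mu_i$) and Weyl's inequality $|\mu_i| \leq \sigma_i$. Therefore $|\partial_F \Wel(F)\, F^\t| \leq |F^\t \partial_F \Wel(F)| \leq c_2(\Wel(F) + c_3)$, and Grönwall on the left-multiplication path produces $|\Wel(G_1 F) - \Wel(F)| \leq C(\Wel(F) + c_3)|G_1 - \I|$.

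Finally I would combine the two one-sided estimates, using the right-multiplication bound (applied with $F$ replaced by $G_1 F$) together with $\Wel(G_1 F) \leq C'(\Wel(F) + c_3)$ to recast the intermediate energy in terms of $\Wel(F)$, which gives the claimed inequality with suitable $c_7, c_8, \gamma$. The decisive point is the transfer from the Mandel tensor to the Cauchy-Kirchhoff-type tensor $\partial_F \Wel \cdot F^\t$, which rests on frame indifference together with the spectral comparison above; without this bootstrap, left multiplication by factors close to $\I$ could alter $\Wel$ by amounts depending on the (unbounded) condition number of $F$.
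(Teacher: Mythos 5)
Your proof is correct but takes a genuinely different route from the paper's. The paper interpolates linearly, setting $H_j(s)=(1{-}s)\I+sG_j$, writes $\Wel(G_1 F G_2)-\Wel(F)=\int_0^1 \tfrac{\mathrm d}{\mathrm d s}\Wel(H_1(s) F H_2(s))\,\mathrm d s$, and bounds the two resulting terms by citing \cite[Lemma 2.5]{Ball02}, which supplies at once the needed controls on $\Wel(GF)$, $\Wel(FG)$, $\partial_F\Wel(GF)F^\t$, and $F^\t\partial_F\Wel(FG)$ for $G$ near $\I$. You instead split by the triangle inequality, flow along exponential one-parameter subgroups, apply Gr\"onwall to each factor separately, and---rather than outsourcing to Ball---derive the Kirchhoff-tensor bound $|\partial_F\Wel(F)F^\t|\le c_2(\Wel(F)+c_3)$ from \eqref{Mandel} via polar decomposition, frame indifference, the second Piola--Kirchhoff representation $\partial_F\Wel=2F\,\partial_C\widehat W$, and the spectral comparison ${\rm tr}((CA)^2)=\sum_i\mu_i(CA)^2\le|CA|^2$. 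Your version is more self-contained at the cost of more machinery; the paper's is shorter because the technical core is delegated to Ball's lemma. Both approaches hinge on the same decisive step---transferring the Mandel bound on $F^\t\partial_F\Wel$ to a bound on $\partial_F\Wel\,F^\t$---which you make explicit via \eqref{frame} and which is implicit in Ball's result. You are also right to flag that, although the lemma statement lists only \eqref{smoothness} and \eqref{Mandel} as hypotheses, frame indifference \eqref{frame} is in fact indispensable for the left-multiplication estimate: without it one controls $\Wel(FG)$ but not $\Wel(GF)$.
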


\begin{proof}
Following \cite[Lemma 2.5]{Ball02}, we find positive
constants $c_0, \, \haz c_0, \, \gamma$ such that, for all $G \in B_\gamma(\I)$ and
all $F \in \GLp$, one has that
\begin{align}
& \Wel(GF) \leq \haz c_0 \Wel(F) + c_0, \quad \Wel(FG) \leq \haz c_0 \Wel(F) + c_0,\label{l1}\\
& |\partial_F W(GF) F^\t| \leq \haz c_0 \Wel(F) + c_0,\label{l2}\\
& |F^\t \partial_F W(FG) | \leq \haz c_0 \Wel(F) + c_0.\label{l3}
\end{align}
For $s\in [0,1]$, let now $H_j(s):= (1{-}s)\I + s G_j$ for $j=1,2$, and note that $H_j \in
B_\gamma(\I)$. As  the derivative   $H_j' = G_j - \I$ is constant we can compute that
\begin{align}
 &\Wel(G_1 F G_2)  - \Wel(F)= \int_0^1 \frac{{\rm d}}{{\rm d} s}
 \Wel(H_1(s)F H_2(s)) \, {\rm d} s \nonumber\\
&= \int_0^1 \partial_F \Wel(H_1 F
 H_2)  {:}  (H_1' F H_2 {+} H_1 F H_2')\, {\rm d} s\nonumber\\
&=\int_0^1 \partial_F \Wel(H_1 F
 H_2) (FH_2)^\t \, {\rm d}s : H_1' + \int_0^1  (H_1 F)^\t
  \partial_F  \Wel(H_1 F
 H_2) \, {\rm d}s : H_2' .\nonumber
\end{align}
We control the above right-hand side as 
\begin{align}
 \left| \int_0^1 \partial_F \Wel(H_1 F
 H_2) (FH_2)^\t \, {\rm d}s : H_1' \right|
&\stackrel{\eqref{l2}}{\leq} \left(\int_0^1 \big(\haz c_0 \Wel(FH_2) {+} c_0\big)
\, {\rm d} s \right) |G_1 {-} \I | \nonumber\\
&\stackrel{\eqref{l1}}{\leq} \big(\haz c_0^2 \Wel(F) {+} c_0\haz c_0 {+} c_0\big)
|G_1 {-} \I |,\nonumber\\
\left|\int_0^1  (H_1 F)^\t  \partial_F  \Wel(H_1 F
 H_2) \, {\rm d}s : H_2'  \right| &\stackrel{\eqref{l3}}{\leq}\left(\int_0^1
\big(\haz c_0 \Wel(H_1 F) {+} c_0\big)
\, {\rm d} s\right) |G_2 {-} \I | \nonumber\\
&\stackrel{\eqref{l1}}{\leq} \big(\haz c_0^2 \Wel(F) {+} c_0\haz c_0{+} c_0\big)
|G_2 {-} \I |,\nonumber
\end{align}
whence the assertion follows.
\end{proof}

\subsection{Lower semicontinuity tool}

In Section \ref{proof} the following lower-semicontinuity lemma is used.

\begin{lemma}[Lower-semicontinuity]\label{balder}
Let $f_0, \, f_\epsi: \Rz^n \to [0,\infty]$ be lower semicontinuous, 
$$\forall v_0 \in \Rz^n: \quad  f_0(v_0) \leq \inf\big\{\liminf_{\epsi
  \to 0} f_\epsi(v_\epsi) \ | \
v_\epsi \to v_0\big\},$$
and $ w_\epsi  \to w_0$ weakly in $\LL^1(\Omega;\Rz^n)$. Denoting by $\nu$
the Young measure generated by $ w_\epsi  $ we have that 
$$ \int_\Omega \left( \int_{\Rz^n} f_0(w) {\rm d} \nu_x(w)\right) \dx
\leq \liminf_{\epsi \to 0} \int_\Omega
f_\epsi(w_\epsi) \dx.$$
In particular, if $f_0 $ is convex we have 
$$ \int_\Omega f_0(w_0) \dx \leq \liminf_{\epsi \to 0} \int_\Omega
f_\epsi(w_\epsi) \dx.$$
\end{lemma}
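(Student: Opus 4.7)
My approach is a standard Balder-type lower-semicontinuity argument for Young measures, combining truncation with Moreau-Yosida regularization. WLOG the right-hand side is finite, so after extracting a subsequence we may assume $\int_\Omega f_\epsi(w_\epsi)\dx$ actually converges to its liminf. Since $w_\epsi \to w_0$ weakly in $\LL^1$, Dunford-Pettis yields equi-integrability of $(w_\epsi)$, and the Fundamental Theorem on Young measures ensures that a further non-relabeled subsequence generates a Young measure $x \mapsto \nu_x$ on $\Rz^n$. The equi-integrability also identifies the barycenter as $w_0(x) = \int_{\Rz^n} v \,\mathrm{d}\nu_x(v)$ for a.e. $x \in \Omega$.

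For the main inequality, I would approximate $f_0$ from below by Lipschitz bounded functions. For $N \in \Nz$ and $r > 0$, set
$$ g_{r,N}(v) := \inf_{v' \in \Rz^n} \Big\{ \min(f_0(v'), N) + r^{-1} |v {-} v'| \Big\},$$
which is $r^{-1}$-Lipschitz, bounded by $N$, and satisfies $g_{r,N}(v) \nearrow f_0(v) \wedge N$ as $r \to 0$ and $\min(f_0,N) \nearrow f_0$ as $N \to \infty$, thanks to lsc of $f_0$. Since each $g_{r,N}$ is continuous and bounded, the defining property of the Young measure immediately gives
$$ \int_\Omega g_{r,N}(w_\epsi) \dx \longrightarrow \int_\Omega \int_{\Rz^n} g_{r,N}(v) \,\mathrm{d}\nu_x(v) \dx \quad \text{as} \ \epsi \to 0.$$

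The key step, and the main obstacle, is to establish
$$ \int_\Omega g_{r,N}(w_\epsi(x)) \dx \leq \int_\Omega f_\epsi(w_\epsi(x)) \dx + o(1) \quad \text{as} \ \epsi \to 0.$$
This is the essential content of Balder's theorem and is exactly where the $\Gliminf$ hypothesis enters: note that $g_{r,N}$ is built from $f_0$ and is \emph{not} pointwise $\leq f_\epsi$, so a naive minorant argument fails. The standard remedy combines measurable selection, picking near-minimizers $v'(x)$ in the inf-convolution defining $g_{r,N}(w_\epsi(x))$, with the pointwise $\Gliminf$ inequality $\liminf_\epsi f_\epsi(v_\epsi) \geq f_0(v)$ along sequences $v_\epsi \to v$; the Lipschitz penalty $r^{-1}|v {-} v'|$ together with the equi-integrability of $(w_\epsi)$ absorbs the error of passing from $g_{r,N}(w_\epsi)$ to the true $f_\epsi(w_\epsi)$ value, yielding the above bound up to an $o(1)$ correction. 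Once this is in hand, sending $\epsi \to 0$, then $r \to 0$, and finally $N \to \infty$ (monotone convergence applied inside each integral) produces the first assertion.

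For the convex case, Jensen's inequality applied to the probability measure $\nu_x$ gives
$$ f_0(w_0(x)) = f_0\Big(\int_{\Rz^n} v \,\mathrm{d}\nu_x(v)\Big) \leq \int_{\Rz^n} f_0(v) \,\mathrm{d}\nu_x(v) \quad \text{for a.e.} \ x \in \Omega,$$
and integrating over $\Omega$ together with the first assertion completes the proof.
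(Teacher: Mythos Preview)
The paper does not prove this lemma in detail; it refers to Balder, Ioffe, and two further references, with the hint that one proceeds by \emph{augmenting the variables to include the parameter $\epsi$}: set $F(v,s):=f_s(v)$ for $s>0$ and $F(v,0):=f_0(v)$, note that the $\Gliminf$ hypothesis makes $F$ lower semicontinuous at every point of $\Rz^n\times\{0\}$, observe that $(w_\epsi,\epsi)$ generates the Young measure $\nu_x\otimes\delta_0$ since the second component converges strongly, and apply the classical fixed-integrand result to $F$ (or to its lower semicontinuous envelope, which agrees with $F$ on $\Rz^n\times\{0\}$).

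Your route is genuinely different. The overall scheme (Moreau--Yosida regularize $f_0$, pass to the limit via the Young measure for the bounded continuous $g_{r,N}$, then send $r\to 0$ and $N\to\infty$) is sound, and the ``key step''
\[
\int_\Omega g_{r,N}(w_\epsi)\dx \;\leq\; \int_\Omega f_\epsi(w_\epsi)\dx \,+\, o(1)
\]
is in fact true --- but the justification you sketch does not establish it. Selecting a near-minimizer $v'(x)$ of the inf-convolution produces a point at which $f_0$, not $f_\epsi$, is evaluated, and there is no mechanism to convert $f_0(v'(x))$ into a bound on $f_\epsi(w_\epsi(x))$ at a single fixed $\epsi$; the $\Gliminf$ hypothesis compares $f_0$ and $f_\epsi$ only along sequences $\epsi\to 0$. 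The correct argument for the key step is a compactness one: since $g_{r,N}\leq f_0$ pointwise and $g_{r,N}$ is continuous, any hypothetical sequence $\epsi_k\to 0$, $|v_k|\leq M$ with $g_{r,N}(v_k)>f_{\epsi_k}(v_k)+\eta$ has (after extraction) $v_k\to v_0$ with $g_{r,N}(v_0)\geq f_0(v_0)+\eta$, contradicting $g_{r,N}\leq f_0$. Hence $\sup_{|v|\leq M}(g_{r,N}(v)-f_\epsi(v))^+\to 0$ for every $M$, while on $\{|w_\epsi|>M\}$ the bound $g_{r,N}\leq N$ together with Markov's inequality gives a contribution $\leq cN/M$. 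The paper's augmentation device is more economical precisely because it bypasses this step by reducing everything to a fixed integrand from the outset.
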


This lemma is in the same spirit of the results by {\sc Balder}
\cite[Thm. 1]{Balder84} and
{\sc Ioffe} \cite{Ioffe77} and can be proved via augmenting the variables by
including the parameter $\epsi$. The reader is referred to \cite[Thm
4.3, Cor. 4.4]{be} or
\cite[Lemma 3.1]{Mielke-et-al08} for a proof
in the case $d=1$. In case of local uniform convergence, a proof can
be found in \cite{Li96}.

%%%%%%%%%%%%%%%%%%%%%%%%%%%%%%%%%
%%%%%%%%%%%%%%%%%%%%%%%%%%%%%%%%%
\def\cprime{$'$} \def\cprime{$'$}

%\bibliographystyle{alpha}
%\bibliography{../bibliogr.bib,../biblio_ulisse.bib}

\end{document}